\newtheorem{thm}{Theorem}
\newcommand{\rd}{{\mathrm{d}}}
\newcommand{\C}{{\mathbb{C}}} 
\newcommand{\N}{{\mathbb{N}}} 
\newcommand{\NN}{{\mathbb{N}}} 
\DeclareSymbolFont{bbold}{U}{bbold}{m}{n}
\DeclareSymbolFontAlphabet{\mathbbold}{bbold}
\newcommand{\calG}{{\mathcal{G}}}
\newcommand{\calH}{{\mathcal{H}}}
\newcommand{\calS}{{\mathcal{S}}}
\newcommand{\ee}{{\rm e}}
\newcommand{\abs}[1]{\left\vert#1\right\vert}
\newcommand{\NOR}{{\mathrm{NOR}}}
\newcommand{\ABS}{{\mathrm{ABS}}}
\begin{document}   
        \title{Notes on Tractability Conditions\\   
         for Linear Multivariate Problems}   
    \author{Peter Kritzer and Henryk Wo\'zniakowski}   
        \date{\today}   
        \maketitle   
   
\begin{abstract}   
   
We study approximations of compact   
linear multivariate operators defined over Hilbert   
spaces. We provide necessary and sufficient conditions on various   
notions of tractability. These conditions are mainly given in terms of sums    
of certain functions depending on the singular values of the   
multivariate problem. They do not require the ordering of these   
singular values which in many cases is difficult to achieve.

\end{abstract}

\section{Introduction}   
   
Tractability of multivariate problems has become a popular research   
subject in the last 25 years. In this paper we study tractability in   
the worst case setting and for algorithms that use finitely many   
arbitrary continuous linear functionals.    
The information complexity of a   
$d$-variate compact linear operator   
$S_d$ is defined as the minimal number   
$n(\varepsilon,S_d)$ of such   
linear functionals which is needed to find an $\varepsilon$ approximation.     
There are various notions of   
tractability which may be summarized by the algebraic and exponential    
cases. For the algebraic case, we want to verify that the   
information complexity $n(\varepsilon,S_d)$ is bounded by certain   
functions of $d$ and $\max(1,\varepsilon^{-1})$ which are, in particular,   
polynomial or \emph{not}    
exponential in some powers of $d$ and $\max(1,\varepsilon^{-1})$.   
For the exponential case, we replace the pair   
$(d,\max(1,\varepsilon^{-1}))$   
by $(d,1+\ln\,\max(1,\varepsilon^{-1}))$,    
and consider the same notions of   
tractability as before.    
   
The algebraic case has been studied in many papers, and necessary and   
sufficient conditions on various notions of tractability are known   
in terms of sums of the singular values of $S_d$. The exponential case has   
been studied in a relatively small number of papers, and the   
corresponding necessary and sufficient conditions on tractability   
are provided in this paper.     
   
The information complexity requires to order the singular values of   
$S_d$. This is usually a difficult combinatorial problem. This problem   
is eliminated by the necessary and sufficient conditions on the   
singular values since they are given by sums which are invariant with   
respect to the ordering of the singular values.    
   
For the reader's convenience we provide all conditions    
for both algebraic and exponential cases for such notions of   
tractability as strong polynomial, polynomial, quasi-polynomial,    
various weak tractabilities, and uniform weak tractability. Furthermore,   
we do this for the absolute and normalized error criteria.      
The results are presented in five tables.    
   
In this paper   
we study general compact linear multivariate problems.   
In the next paper we illustrate the    
results of this paper for tensor product problems for which    
the singular values of a $d$-variate problem are given as products of   
the singular values of    
univariate problems.

\section{Preliminaries}   
   
Consider two sequences of Hilbert spaces    
$\{\calH_d\}_{d \in \N}$ and $\{\calG_d\}_{d \in \N}$,    
        and a sequence of compact    
linear solution operators    
$$   
\calS\,=\, \{S_d: \calH_d \to \calG_d\}_{d \in \N}.     
$$   
Here, we denote by $\N$ the set of positive integers,   
whereas $\N_0=\N\cup\{0\}$.

Our aim is to determine    
tractability conditions of the problem of    
        finding approximations to $\{S_d(f)\}$ for $f$ from the   
        unit ball of $\calH_d$. The approximations are obtained by algorithms   
$\{A_{d,n}: \calH_d \to \calG_d\}_{d \in \N,n\in\N_0}$.    
For $n=0$,    
we set $A_{d,0}:=0$, and for $n\ge 1$, $A_{d,n}(f)$    
depends only on $n$ continuous    
linear functionals $L_1(f), L_2(f),\ldots, L_n(f)$, i.e.,    
$$   
A_{d,n}(f)=\phi_n(L_1(f),L_2(f),\dots,L_n(f))   
$$   
with $\phi_n:\C^n\to \calG_d$ and $L_j\in \calH_d^*$.    
The choice of $L_j$ as well as $n$ can be adaptive, i.e.,   
$L_j=L_j(\cdot;L_1(f),L_2(f),\dots,L_{j-1}(f))$ and $n$ can be a   
function of the $L_j(f)$'s,    
see \cite{TWW88} as well as \cite{NW08} for details.    
We consider the worst case setting in which the error of $A_{d,n}$ is   
given by   
$$   
e(A_{d,n})\,=\,\sup_{\substack{f\in\calH_d \\ \|f\|_{\calH_d}\le1}}   
\|S_d(f)-A_{d,n}(f)\|_{\calG_d}.   
$$   
Let   
$$   
e(n,S_d)\,=\,\inf_{A_{d,n}}\,e(A_{d,n})   
$$   
denote the $n$th minimal worst case error,    
where the infimum is extended over all admissible algorithms $A_{d,n}$. Then    
the information complexity $n(\varepsilon,\calS_d)$ is   
the minimal number $n$ of continuous linear functionals    
which is needed to find an algorithm $A_{d,n}$ which approximates   
$S_d$ with error at most $\varepsilon$. More precisely,    
we consider the absolute (ABS) and normalized (NOR) error criteria   
in which    
\begin{eqnarray*}   
 n(\varepsilon,S_d)=n_{\ABS}(\varepsilon, S_d)&=&   
\min\{n\,\colon\,e(n,S_d)\le \varepsilon\},\\   
 n(\varepsilon,S_d)=n_{\NOR}(\varepsilon,S_d)&=&   
\min\{n\,\colon\,e(n,S_d) \le \varepsilon\,\|S_d\|\}.   
\end{eqnarray*}   
It is known from \cite{TWW88}, see also \cite{NW08},   
that the information complexity is fully determined by    
the singular values of $S_d$, which    
are the same as the square roots of the eigenvalues of    
the compact self-adjoint and positive semi-definite   
linear   
operator $W_d=S_d^\ast S_d:\calH_d\rightarrow\calH_d$.    
We denote these eigenvalues by $\lambda_{d,1}, \lambda_{d,2},\ldots$,    
ordered in a non-increasing fashion. Then    
for $\varepsilon>0$,   
\begin{eqnarray}   
n_{\ABS}(\varepsilon, S_d)   
&=&\min\{n\,\colon\,\lambda_{d,n+1}\le   
\varepsilon^2\},   
\label{eq:infcomplabs}\\   
n_{\NOR}(\varepsilon, S_d)&=&   
\min\{n\,\colon\,\lambda_{d,n+1}\le \varepsilon^2 \lambda_{d,1}\}.   
\label{eq:infcomplnor}   
\end{eqnarray}   
Clearly,    
$n_{\ABS}(\varepsilon, S_d)=0$ for $\varepsilon\ge   
\sqrt{\lambda_{d,1}}=\|S_d\|$, and    
$n_{\NOR}(\varepsilon, S_d)=0$ for $\varepsilon\ge1$.   
Therefore for $\ABS$ we can restrict ourselves to $\varepsilon\in(0,\|S_d\|)$,   
whereas for $\NOR$ to $\varepsilon\in(0,1)$.   
Since $\|S_d\|$ can be arbitrarily large, to deal simultaneously    
with $\ABS$ and $\NOR$ we consider $\varepsilon\in(0,\infty)$.    
It is known that $n_{\ABS/\NOR}(\varepsilon,S_d)$ is finite for all   
$\varepsilon>0$ iff $S_d$ is compact, which justifies our assumption   
about the compactness of~$S_d$.

We study how $n(\varepsilon, S_d)$    
depends on $\varepsilon$ and $d$. We compare two types of tractability:   
\begin{itemize}   
 \item Tractability with respect to $(d,\max(1,\varepsilon^{-1}))$   
 which is called algebraic tractability and abbreviated by ALG.   
 \item Tractability with respect to   
 $(d,1+\ln\,\max(1,\varepsilon^{-1}))$    
which is called exponential tractability and  abbreviated by EXP.    
\end{itemize}   
We now recall various notions of tractability which will be studied in   
this paper.   
\begin{itemize}   
\item {\bf{$\calS$ is ALG-SPT-ABS/NOR}} (strongly polynomially   
tractable in the algebraic case for the absolute or normalized error   
criterion) iff there are non-negative $C$ and $p$ such that   
for all \ $d\in\N,\ \varepsilon>0$ we have   
$$   
n_{\ABS/\NOR}(\varepsilon,S_d)\le C\,(\max(1,\varepsilon^{-1}))^p.   
$$   
The infimum of $p$ satisfying the bound above is denoted by $p^*$   
and called the exponent of ALG-SPT-ABS/NOR.    
\newline \qquad   
\item   
{\bf{$\calS$ is EXP-SPT-ABS/NOR}} (strongly polynomially   
tractable in the exponential case for the absolute or normalized error   
criterion) iff there are non-negative $C$ and $p$ such that   
for all\ $d\in\N,\ \varepsilon>0$ we have    
$$   
n_{\ABS/\NOR}(\varepsilon,S_d)\le C\,(   
1+\ln\,\max(1,\varepsilon^{-1}))^p.   
$$   
The infimum of $p$ satisfying the bound above is denoted by $p^*$   
and called the exponent of EXP-SPT-ABS/NOR.    
\newline \qquad   
\item    
{\bf{$\calS$ is ALG-PT-ABS/NOR}} (polynomially   
tractable in the algebraic case for the absolute or normalized error   
criterion) iff there are non-negative $C,p$, and $q$ such that   
for all \ $d\in\N,\ \varepsilon>0$ we have   
$$   
n_{\ABS/\NOR}(\varepsilon,S_d)\le C\,d^{\,q}\,   
(\max(1,\varepsilon^{-1}))^p.   
$$   
\vskip 0.5pc   
\item   
{\bf{$\calS$ is EXP-PT-ABS/NOR}} (polynomially   
tractable in the exponential case for the absolute or normalized error   
criterion) iff there are non-negative $C,p$, and $q$ such that   
for all\ $d\in\N,\ \varepsilon>0$ we have    
$$   
n_{\ABS/\NOR}(\varepsilon,S_d)\le C\,d^{\,q}\,   
(1+\ln\,\max(1,\varepsilon^{-1}))^p.   
$$   
\vskip 0.5pc   
\item   
{\bf{$\calS$ is ALG-QPT-ABS/NOR}} (quasi-polynomially   
tractable in the algebraic case for the absolute or normalized error   
criterion) iff there are non-negative $C$ and $p$ such that   
for all \ $d\in\N,\ \varepsilon>0$ we have    
$$   
n_{\ABS/\NOR}(\varepsilon,S_d)\le   
C\,\exp\left(p\,(1+\ln\,d)(1+\ln\,\max(1,\varepsilon^{-1}))\right).   
$$    
The infimum of $p$ satisfying the bound above is denoted by $p^*$   
and called the exponent of ALG-QPT-ABS/NOR.    
\newline \qquad   
\item   
{\bf{$\calS$ is EXP-QPT-ABS/NOR}} (quasi-polynomially   
tractable in the exponential case for the absolute or normalized error   
criterion) iff there are non-negative $C$ and $p$ such that   
for all \ $d\in\N,\ \varepsilon>0$ we have    
$$   
n_{\ABS/\NOR}(\varepsilon,S_d)\le   
C\,\exp\left(p\,(1+\ln\,d)(1+\ln(1+\ln\,\max(1,\varepsilon^{-1})))\right).   
$$    
The infimum of $p$ satisfying the bound above is denoted by $p^*$   
and called the exponent of EXP-QPT-ABS/NOR.    
\newline \qquad   
\item   
{\bf{$\calS$ is ALG-$(s,t)$-WT-ABS/NOR}} ($(s,t)$-weakly    
tractable in the algebraic case for the absolute or normalized error   
criterion) for positive $s$ and $t$ iff    
$$   
\lim_{d+\varepsilon^{-1}\to\infty}\   
\frac{\ln\, \max(1,n_{\ABS/\NOR}(\varepsilon,S_d))}   
{d^{\,t}+(\max(1,\varepsilon^{-1}))^s}\,=\,0.   
$$    
\newline \qquad   
\item   
{\bf{$\calS$ is EXP-$(s,t)$-WT-ABS/NOR}} ($(s,t)$-weakly    
tractable in the exponential case for the absolute or normalized error   
criterion) for positive $s$ and $t$ iff    
$$   
\lim_{d+\varepsilon^{-1}\to\infty}\   
\frac{\ln\,\max(1,n_{\ABS/\NOR}(\varepsilon,S_d))}{d^{\,t}+   
(1+\ln\,\max(1,\varepsilon^{-1}))^{s}}\,=\,0.   
$$    
\vskip 0.5pc   
\item    
{\bf{$\calS$ is ALG-UWT-ABS/NOR}} (uniformly weakly tractable   
in the algebraic case for the absolute or normalized error   
criterion) iff    
{\bf{$\calS$ is ALG-$(s,t)$-WT-ABS/NOR}} for all positive $s$ and $t$.   
\newline \qquad   
\item    
{\bf{$\calS$ is EXP-UWT-ABS/NOR}} (uniformly weakly tractable   
in the exponential case for the absolute or normalized error   
criterion) iff    
{\bf{$\calS$ is EXP-$(s,t)$-WT-ABS/NOR}} for all positive $s$ and $t$.   
\newline \qquad   
\end{itemize}   
   
For the algebraic case, necessary and sufficient   
conditions on the eigenvalues $\lambda_{d,n}$'s of $W_d$    
for various notions of tractability as well as    
the formulas for the exponents of tractability    
can be found in \cite{NW08}--\cite{NW12} for    
ALG-SPT, ALG-PT, ALG-QPT, and in \cite{WW17} for ALG-$(s,t)$-WT.   
ALG-UWT was defined in \cite{S13}, and conditions on tractability    
in this case can be easily obtained by combining conditions on   
ALG-$(s,t)$-WT as will be done in this paper.    
For the exponential case, corresponding necessary   
and sufficient conditions on $\lambda_{d,n}$'s as well as    
the formulas and bounds for the exponents of tractability will be derived in    
this paper.    
   
A few words of comment on these tractability definitions are in order.   
Note that the tractability notions are defined in terms of    
$\max(1,\varepsilon^{-1})$ and $1+\ln\,\max(1,\varepsilon^{-1})$.   
Before, this was   
usually done in terms of $\varepsilon^{-1}$ and $\ln\,\varepsilon^{-1}$ with   
an extra assumption that $\varepsilon\in(0,1)$. Since we want to   
consider arbitrary positive $\varepsilon$, the term   
$\varepsilon^{-1}$ is arbitrarily small for large $\varepsilon$, and   
then the term $\ln\,\varepsilon^{-1}$ is arbitrarily close to $-\infty$. These   
undesired properties  disappear if we consider   
$\max(1,\varepsilon^{-1})$   
instead of $\varepsilon^{-1}$, and    
$1+\ln\,\max(1,\varepsilon^{-1})$    
instead of   
$\ln\,\varepsilon^{-1}$, and they tend to $1$ as $\varepsilon$ becomes   
large.

We stress that we did not define the exponents of polynomial   
tractability. The reason is that in this case the pair $(p,q)$   
is usually \emph{not} uniquely defined and we may decrease, say, $p$   
at the expense of $q$ and vice versa. Obviously,    
we would be interested in finding the smallest possible $p$ and $q$   
for a given problem $\calS$.    
    
Modulo UWT, we listed the tractability notions from the most demanding   
to the most lenient ones. Obviously, we have    
\begin{eqnarray*}   
\mbox{ALG/EXP-SPT-ABS/NOR}&\implies&    
\mbox{ALG/EXP-\ PT-ABS/NOR}\ \implies \\   
\mbox{ALG/EXP-QPT-ABS/NOR}&\implies&     
\mbox{ALG/EXP-$(s,t)$-WT-ABS/NOR  $\forall\,s,t>0$.}   
\end{eqnarray*}   
Furthermore, for all $s_1\ge s_2$ and $t_1\ge t_2$   
$$   
 \mbox{ALG/EXP-$(s_2,t_2)$-WT-ABS/NOR}\ \ \implies\ \   
 \mbox{ALG/EXP-$(s_1,t_1)$-WT-ABS/NOR}.   
$$   
   
\section{Overview of previous and new results}   
We summarize previous and newly found conditions for the various    
tractability notions in the Tables \ref{table:spt}--\ref{table:uwt}.   
\begin{table}[h!]   
\begin{center}   
\caption{\quad {\bf{SPT}}}   
\label{table:spt}   
\begin{tabular}{|p{11cm}|}   
\hline \hline   
\\    
 \multicolumn{1}{|c|}{\textbf{$\calS$ is ALG-SPT-ABS iff}}\\   
\\   
\ \ \ $\exists\ \tau>0$ and   
 $\widetilde{C}\in\NN$ \ such that \\    
$$    
\sup_{d\in\NN} \, \sum_{j=\widetilde{C}}^\infty \lambda_{d,j}^\tau<\infty.   
$$\\    
The exponent $p^*=   
\inf\{2\,\tau:\ \mbox{$\tau$   
satisfies the bound above}\}$.   
\\    
 \hline \hline   
\\   
 \multicolumn{1}{|c|}{\textbf{$\calS$ is EXP-SPT-ABS iff}}\\   
\\   
\ \ \  $\exists\ \tau>0$ and   
 $\widetilde{C}\in\NN$ \ such that \\    
 $$   
\sup_{d\in\NN} \,    
\sum_{j=\widetilde{C}}^\infty \lambda_{d,j}^{j^{-\tau}}<\infty.   
$$\\    
The exponent $p^*=   
\inf\{1/\tau:\ \mbox{$\tau$   
satisfies the bound above}\}$.\\    
\hline \hline   
 \\   
\multicolumn{1}{|c|}{\textbf{$\calS$ is ALG-SPT-NOR iff}}\\   
\\   
\ \ \ $\exists\ \tau>0$ \ such that \\   
 $$   
\sup_{d\in\NN} \ \sum_{j=1}^\infty \left(\frac{\lambda_{d,j}}   
{\lambda_{d,1}}\right)^\tau<\infty.   
$$ \\     
The exponent $p^*=   
\inf\{2\,\tau:\ \mbox{$\tau$   
satisfies the bound above}\}$.\\    
   
 \hline \hline   
 \\   
\multicolumn{1}{|c|}{\textbf{$\calS$ is EXP-SPT-NOR iff}}\\   
\\   
\ \ \ $\exists\ \tau>0$ \ such that \\    
 $$   
\sup_{d\in\NN}\   
 \sum_{j=1}^\infty \left(\frac{\lambda_{d,j}}   
{\lambda_{d,1}}\right)^{j^{-\tau}}<\infty.   
$$   
\\    
The exponent $p^*=   
\inf\{1/\tau:\ \mbox{$\tau$   
satisfies the bound above}\}$.\\    
\hline \hline   
\end{tabular}   
\end{center}   
\end{table}   
\newpage   
   
We stress that for SPT-ABS the values of finitely many largest    
eigenvalues do not matter and they may be arbitrarily large.    
For SPT-NOR, the eigenvalues are normalized and their quotients    
are at most $1$.   
However, the multiplicity of the largest eigenvalue must be    
uniformly bounded in $d$ to achieve SPT.

   
\begin{table}[h!]   
\begin{center}   
\caption{{\bf{PT}}}    
\label{table:pt}   
\begin{tabular}{|p{11cm}|}   
\hline\hline   
 \\   
\multicolumn{1}{|c|}{\textbf{$\calS$ is ALG-PT-ABS iff}}\\   
\\   
\ \ \ $\exists$ \    
$\tau_1,\tau_3 \ge 0$ and $\tau_2,\,\widetilde{C}>0$ such that\\    
 $$   
\sup_{d\in\NN}\ d^{-\tau_1}    
\sum_{j=\lceil \widetilde{C} d^{\tau_3}\rceil}^\infty    
\lambda_{d,j}^{\tau_2}<\infty.   
$$   
\\    
 \hline \hline   
 \\   
\multicolumn{1}{|c|}{\textbf{$\calS$ is EXP-PT-ABS iff}}\\   
\\   
\ \ \ $\exists$\    
 $\tau_1,\tau_3 \ge 0$ and $\tau_2,\,\widetilde{C}> 0$ such that \\   
$$ \sup_{d\in\NN}\ d^{-\tau_1} \sum_{j=\lceil \widetilde{C}   
 d^{\tau_3}\rceil}^\infty \lambda_{d,j}^{j^{-\tau_2}}<\infty.   
$$ \\   
 \hline \hline   
 \\   
\multicolumn{1}{|c|}{\textbf{$\calS$ is ALG-PT-NOR iff}}\\   
\\   
\ \ \ $\exists$\    
$\tau_1\ge0$ and $\tau_2>0$ such that\\    
 $$   
\sup_{d\in\NN}\ d^{-\tau_1}   
\sum_{j=1}^\infty   
\left(\frac{\lambda_{d,j}}{\lambda_{d,1}}\right)^{\tau_2}   
<\infty.   
$$   
\\   
 \hline \hline   
 \\   
\multicolumn{1}{|c|}{\textbf{$\calS$ is EXP-PT-NOR iff}}\\   
\\   
\ \ \ $\exists$\    
 $\tau_1\ge 0$ and $\tau_2>0$ such that\\ \\    
$$   
\sup_{d\in\NN}\    
d^{-\tau_1} \sum_{j=1}^\infty   
\left(\frac{\lambda_{d,j}}{\lambda_{d,1}}   
\right)^{j^{-\tau_2}}<\infty.   
$$ \\   
 \hline \hline   
\end{tabular}   
\end{center}   
\end{table}   
   
We stress that for PT-ABS the values of polynomially many largest   
eigenvalues are irrelevant. Again, for PT-NOR all of them matter   
and the multiplicity of the largest eigenvalue must be polynomially   
bounded in $d$.     
   
The only difference between SPT and PT is that   
the corresponding sums of some powers of the eigenvalues must be   
bounded in the SPT case whereas in the PT case they may polynomially   
increase with $d$.

\newpage   
   
\begin{table}[h!]   
\begin{center}   
\caption{{\bf{QPT}}}   
\label{table:qpt}   
\begin{tabular}{|p{11cm}|}   
\hline\hline   
 \\   
\multicolumn{1}{|c|}{\textbf{$\calS$ is ALG-QPT-ABS iff}}\\   
\\   
\ \ \ $\exists$\    
 $\tau_1\ge 0$, and $\tau_2,\,\widetilde{C}>0$ such that \\    
  $$   
\sup_{d\in\NN}\ d^{-2}\left(\sum_{j=\lceil \widetilde{C} d^{\tau_1}   
\rceil}^\infty \lambda_{d,j}^{\tau_2 (1+\ln\,d)}\right)^{1/\tau_2}   
<\infty.   
$$ \\    
The exponent $p^*=   
\inf\{\,\max(\tau_1,2\tau_2):\    
\mbox{$\tau_1,\tau_2$ satisfy the bound above}\}$.   
\vskip 0.5pc   
\\   
 \hline \hline   
 \\   
\multicolumn{1}{|c|}{\textbf{$\calS$ is EXP-QPT-ABS iff}}\\   
\\   
\ \ \ $\exists$\    
$\tau>0$ such that\\    
$$   
 \sup_{d\in\NN}\ d^{-\tau}    
\sum_{j=1}^\infty   
  \left[1+\tfrac12\,\ln\,\max\left(1,\frac{1}{\lambda_{d,j}}\right)   
\right]^{-\tau(1+\ln\,d)}<\infty.   
$$   
\\   
The exponent $p^*=   
\inf\{\,\tau:\,    
\mbox{$\tau$ satisfies the bound above}\}$.   
\vskip 0.5pc   
\\   
\hline \hline   
\\   
\multicolumn{1}{|c|}{\textbf{$\calS$ is ALG-QPT-NOR iff}}\\   
\\   
\ \ \ $\exists$\    
 $\tau >0$ such that \\    
 $$   
\sup_{d\in\NN}\, d^{-2}\left(\sum_{j=1}^\infty    
\left(\frac{\lambda_{d,j}}{\lambda_{d,1}}   
\right)^{\tau (1+\ln\,d)}\right)^{1/\tau}<\infty.   
$$ \\   
The exponent $p^*=   
\inf\{\,2\tau:\, \    
\mbox{$\tau$ satisfies the bound above}\}$.   
\vskip 0.5pc   
\\   
 \hline \hline   
 \\   
\multicolumn{1}{|c|}{\textbf{$\calS$ is EXP-QPT-NOR iff}}\\   
\\   
\ \ \ $\exists$\    
 $\tau> 0$ such that\\ \\    
 $$   
\sup_{d\in\NN}\ d^{-\tau} \sum_{j=1}^\infty   
  \left[1+\tfrac12\ln\,\frac{\lambda_{d,1}}   
{\lambda_{d,j}}\right]^{-\tau(1+\ln\,d)}<\infty.   
$$ \\   
The exponent $p^*=   
\inf\{\,\tau:\, \    
\mbox{$\tau$ satisfies the bound above}\}$.   
\vskip 0.5pc   
\\   
\hline \hline   
\end{tabular}   
\end{center}   
\end{table}   
   
\newpage    
   
\begin{table}[h!]   
\begin{center}   
\caption{{\bf{$(s,t)$-WT}}}   
\label{table:wt}   
\begin{tabular}{|p{11cm}|}   
\hline \hline   
 \\   
\multicolumn{1}{|c|}{\textbf{$\calS$ is ALG-$(s,t)$-WT-ABS iff}}\\   
\\   
$$   
\sup\limits_{d\in\NN}\ \exp (-cd^{\,t})\sum\limits_{j=1}^\infty    
 \exp \left(-c\left(\frac1{\lambda_{d,j}}\right)^{s/2}\right)<\infty\ \ \    
\mbox{for all $c>0$.}   
$$    
\\   
 \hline \hline   
 \\   
\multicolumn{1}{|c|}{\textbf{$\calS$ is EXP-$(s,t)$-WT-ABS iff}}\\   
\\   
$$   
\sup\limits_{d\in\NN}\ \exp \left(-cd^{\,t}\right)\,\sum\limits_{j=1}^\infty    
 \exp \left(-c   
 \left[1+\ln\left(2\,\max\left(1,\frac1{\lambda_{d,j}}\right)\right)   
\right]^s   
 \right)<\infty   
\ \ \ \mbox{for all $c>0$.}   
$$ \\   
 \hline\hline   
 \\   
\multicolumn{1}{|c|}{\textbf{$\calS$ is ALG-$(s,t)$-WT-NOR iff}}\\   
\\   
$$   
\sup\limits_{d\in\NN}\ \exp\left(-cd^{\,t}\right)\,\sum\limits_{j=1}^\infty    
 \exp   
 \left(-c\left(\frac{\lambda_{d,1}}{\lambda_{d,j}}\right)^{s/2}\right)<\infty   
\ \ \  \mbox{for all $c>0$.}   
$$  \\   
 \hline \hline   
 \\   
\multicolumn{1}{|c|}{\textbf{$\calS$ is EXP-$(s,t)$-WT-NOR iff}}\\   
\\   
$$   
\sup\limits_{d\in\NN}\ \exp\left(-cd^{\,t}\right)\,   
\sum\limits_{j=1}^\infty    
 \exp \left(-c \left[1+\ln   
     \frac{2\lambda_{d,1}}{\lambda_{d,j}}\right]^s \right)<\infty   
\ \ \  \mbox{for all $c>0$.}   
$$    
\\   
 \hline \hline   
\end{tabular}   
\end{center}   
\end{table}   
For the case of ALG, we need to guarantee the convergence of the   
series depending on $\lambda_{d,j}^{-s/2}$ or   
$(\lambda_{d,1}/\lambda_{d,j})^{s/2}$,    
whereas for the case of EXP,   
the corresponding series now depends on the logarithms of   
$\lambda_{d,j}^{-1}$ or   
$(\lambda_{d,1}/\lambda_{d,j})$   
raised to the power~$s$.   
Furthermore, in both cases, the convergent series for a fixed $d$ must   
be at most of order $\exp(cd^{\,t})$ and this must hold for all   
positive $c$.   
   
Note that for ABS, the number of eigenvalues $\lambda_{d,j}\ge1$    
must be of order $\exp(o(d^{\,t}))$, whereas for NOR,   
the multiplicity of the largest eigenvalues $\lambda_{d,1}$   
must be of order $\exp(o(d^{\,t}))$.    
      
\newpage   
   
\begin{table}[h!]   
\begin{center}   
\caption{{\bf{UWT}}}   
`\label{table:uwt}   
\begin{tabular}{|p{11cm}|}   
\hline \hline   
 \\   
\multicolumn{1}{|c|}{\textbf{$\calS$ is ALG-UWT-ABS iff}}\\   
\\   
$$   
\lim_{n\to\infty}\ \ \inf_{d\in\N:\ d\le [\ln\,n]^k}\    
\frac{\ln\,\frac{1}{\lambda_{d,n}}}{\ln\,\ln\,n}\ =\ \infty \ \ \    
\mbox{for all $k\in\N$},   
$$    
\\   
 \hline \hline   
 \\   
\multicolumn{1}{|c|}{\textbf{$\calS$ is EXP-UWT-ABS iff}}\\   
\\   
$$   
\lim_{n\to\infty}\ \ \inf_{d\in\N:\ d\le [\ln\,n]^k}\   
\frac{\ln \left(\max\left(1,\ln\,\frac{1}{\lambda_{d,n}}\right)\right)}{\ln\,\ln\,n}\ =   
\ \infty \ \ \    
\mbox{for all $k\in\N$}.   
$$    
\\   
 \hline \hline   
\\   
\multicolumn{1}{|c|}{\textbf{$\calS$ is ALG-UWT-NOR iff}}\\   
\\   
$$   
\lim_{n\to\infty}\ \ \inf_{d\in\N:\ d\le [\ln\,n]^k}\    
\frac{\ln\,\frac{\lambda_{d,1}}{\lambda_{d,n}}}{\ln\,\ln\,n}\ =\ \infty \ \ \    
\mbox{for all $k\in\N$},   
$$    
\\   
 \hline \hline   
\\    
 \multicolumn{1}{|c|}{\textbf{$\calS$ is EXP-UWT-NOR iff}}\\   
\\   
$$   
\lim_{n\to\infty}\ \ \inf_{d\in\N:\ d\le [\ln\,n]^k}\    
\frac{\ln\left(\max\left(1,\ln\,\frac{\lambda_{d,1}}{\lambda_{d,n}}\right)\right)}{\ln\,\ln\,n}\ =   
\ \infty \ \ \    
\mbox{for all $k\in\N$}.   
$$    
\\   
 \hline \hline   
\end{tabular}   
\end{center}   
\end{table}   
\vskip 1pc   
This is the only table which depends on the ordered eigenvalues   
$\lambda_{d,n}$. We obtain UWT if $\lambda_{d,n}$'s go to zero   
sufficiently fast. Note that the case of ALG requires the single   
logarithm of $1/\lambda_{d,n}$ or $\lambda_{d,1}/\lambda_{d,n}$,   
whereas the case of EXP requires the double logarithms of the same   
expressions. This quantifies how much harder the case of EXP is as    
compared to the case of ALG.    
   
For example,   
take $\lambda_{d,n}=n^{-\alpha}$ for an arbitrary $\alpha>0$    
for all $n,d\in\NN$. Then ABS=NOR and    
we obtain ALG-UWT-ABS/NOR, however EXP-UWT-ABS/NOR does not hold.     
Hence, polynomial decay of the eigenvalues $\lambda_{d,n}$ is    
enough for ALG-UWT-ABS/NOR, and not enough for EXP-UWT-ABS/NOR.   
On the other hand, we obtain EXP-UWT-ABS/NOR if, say,    
$\lambda_{d,n}=\exp(-n^{\alpha})$   
for an arbitrary $\alpha>0$ for all $n,d\in \NN$.     
   
The dependence on $d$ is only through the infimum of $d\le   
[\ln\,n]^k$. Note that for large $n$ or $k$,    
we need to consider more $d$'s and even the smallest quotient   
with respect to $d$    
must be sufficiently large for large $n$.     
   
\newpage   
\section{Proofs}   
In this section we are ready to    
prove necessary and sufficient   
conditions on the eigenvalues $\lambda_{d,j}$'s for    
tractability in the exponential case presented in the tables above.    
The subsequent subsections will   
address these conditions for various notions of tractability.   
   
It turns out that the proofs    
for the absolute and normalized error criteria are   
similar. Therefore we combine them by using the abbreviation   
$$   
 \mathrm{CRI}_d=\begin{cases}   
                 1 &\mbox{for $\ABS$},\\   
                 \lambda_{d,1} &\mbox{for $\NOR$}.   
                 \end{cases}   
$$   
   
\subsection{Strong Polynomial and Polynomial Tractability}

\begin{thm}[EXP-SPT/PT-ABS/NOR]\label{thm1}   
\quad   
   
\vskip 0.5pc   
$\calS$ is EXP-SPT/PT-ABS/NOR iff    
there exist $\tau_1,\tau_3\ge0$ and $\tau_2,\widetilde{C}>0$ such that   
\begin{equation}\label{eq:EXP-SPT-ABS}   
 M:=\sup_{d\in\NN}\,d^{\,-\tau_1}\,\sum_{j=\lceil \widetilde{C}\,   
d^{\,\tau_3}\rceil}^\infty    
\left(\frac{\lambda_{d,j}}{{\rm CRI}_d}\right)^{j^{-\tau_2}}<\infty.   
\end{equation}   
For SPT, we have $\tau_1=\tau_3=0$, and for NOR we have   
$\widetilde{C}=1$ and $\tau_3=0$.   
   
If this holds then   
\[   
 n_{\ABS/\NOR}(\varepsilon, S_d) \le    
\lfloor M\ee d^{\,\tau_1}\rfloor+    
 \lceil \widetilde{C}\,d^{\,\tau_3}\rceil+    
\lceil\max(0,2\ln \varepsilon^{-1})^{1/\tau_2}\rceil,   
\]   
and the exponent of EXP-SPT-ABS/NOR is    
$$   
p^*=\inf\{1/\tau_2:\ \mbox{$\tau_2$   
    satisfies \eqref{eq:EXP-SPT-ABS}}\}.   
$$   
\end{thm}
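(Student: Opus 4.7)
The plan is to prove both directions using the unified notation $r_{d,j}:=\lambda_{d,j}/\mathrm{CRI}_d$, so that ABS and NOR are handled simultaneously. By \eqref{eq:infcomplabs}--\eqref{eq:infcomplnor}, $n_{\ABS/\NOR}(\varepsilon,S_d)\le N$ is equivalent to $r_{d,N+1}\le\varepsilon^2$, so the analysis reduces to controlling the decay of the non-increasing sequence $(r_{d,j})_{j\ge 1}$.

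For sufficiency, suppose \eqref{eq:EXP-SPT-ABS} holds, and set $K:=\lceil\widetilde{C}d^{\tau_3}\rceil$ and $T:=\lceil\max(0,2\ln\varepsilon^{-1})^{1/\tau_2}\rceil$. The key threshold observation is that if $j\ge T$ and $r_{d,j}>\varepsilon^2$, then $j^{-\tau_2}\ln(1/r_{d,j})<1$, and hence $r_{d,j}^{j^{-\tau_2}}>1/\ee$ (the case $\varepsilon\ge 1$ is automatic, since then $r_{d,j}>1$ already yields the same conclusion). Assume toward contradiction that $r_{d,n+1}>\varepsilon^2$ for some $n\ge K+T+\lfloor M\ee d^{\tau_1}\rfloor$; by monotonicity the strict inequality propagates to every $j\in[\max(K,T),n+1]$, so each of the $n+2-\max(K,T)$ corresponding terms exceeds $1/\ee$ and contributes more than $(n+2-\max(K,T))/\ee$ to the series. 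Combined with the upper bound $Md^{\tau_1}$, this forces $n\le K+T+M\ee d^{\tau_1}-2$, contradicting the choice of $n$.

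For necessity, assume EXP-PT-ABS/NOR with constants $C,p,q$. Inverting $n_{\ABS/\NOR}(\varepsilon,S_d)\le Cd^{\,q}(1+\ln\max(1,\varepsilon^{-1}))^p$ produces, for $n$ large relative to $d$, the super-exponential decay $r_{d,n+1}\le\exp\bigl(2-2(n/(Cd^{\,q}))^{1/p}\bigr)$. Fix any $\tau_2\in(0,1/p)$; then
\[
r_{d,j}^{j^{-\tau_2}}\le\exp\bigl(2j^{-\tau_2}-2(Cd^{\,q})^{-1/p}j^{1/p-\tau_2}\bigr)
\]
for all sufficiently large $j$, and a routine integral comparison shows that $\sum_{j\ge\lceil\widetilde{C}d^{\tau_3}\rceil}r_{d,j}^{j^{-\tau_2}}$ is polynomially bounded in $d$, furnishing admissible $\tau_1,\tau_3$. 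The SPT case corresponds to $q=0$, so the sum is uniformly bounded and one may take $\tau_1=\tau_3=0$; in the NOR case, $r_{d,1}=1$ means the initial term is just $1$, so there is no obstruction to starting the sum at $j=1$, i.e.\ $\widetilde{C}=1$ and $\tau_3=0$.

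Finally, the exponent formula follows by tracking $\tau_2$ in both directions: the sufficiency bound shows $p^*\le 1/\tau_2$ for every admissible $\tau_2$, while the necessity construction yields admissible $\tau_2$ arbitrarily close to $1/p$ from below for each $p>p^*$, so $\inf\{1/\tau_2\}=p^*$. The main obstacle I anticipate is the necessity step: one must pick $\tau_2$ together with matching $\tau_1,\tau_3$ sharply enough to turn the raw complexity bound into the summability estimate, and handle the transition via inversion of $n\mapsto\varepsilon$, including the trivial regime $\varepsilon\ge 1$ where $n_{\ABS/\NOR}(\varepsilon,S_d)$ may vanish while the eigenvalues themselves can be arbitrarily large in the ABS setting.
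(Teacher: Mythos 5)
Your proposal is correct and follows essentially the same route as the paper: the sufficiency step is the paper's argument of counting the indices whose summand exceeds $1/\ee$ (the paper's set $B_d$), merely recast as a contradiction, and the necessity step is the same inversion of the complexity bound into super-exponential decay of $\lambda_{d,j}/{\rm CRI}_d$ followed by an integral comparison with $\tau_2<1/p$, with the same treatment of the SPT ($q=0$) and NOR (head of the sum bounded by $j_d^*=\mathcal{O}(d^{\,q})$) cases and the same two-sided tracking of $\tau_2$ for the exponent formula.
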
   
\begin{proof}   
Let us first assume that \eqref{eq:EXP-SPT-ABS} holds. We then need to show   
that for some $C,q,p\ge0$ we have   
\[   
  n_{\ABS/\NOR}(\varepsilon, S_d) \le C\,d^{\,q}    
(1+ \ln\,\max(1,\varepsilon^{-1}))^p   
\ \ \ \mbox{for all $\varepsilon>0$ and $d\in\N$,}   
\]   
where $q=0$ in the case of SPT.   
To this end, let    
\[   
  B_d:=\left\{j\in\NN:\, j\ge \lceil \widetilde{C}\,d^{\,\tau_3}\rceil   
    \quad   
\mbox{and}\quad \left(\frac{\lambda_{d,j}}{{\rm CRI}_d}   
\right)^{j^{-\tau_2}}>\frac1{\ee}\right\}.    
\]   
Since \eqref{eq:EXP-SPT-ABS} holds,    
we see that $\abs{B_d}< M\ee\,d^{\,\tau_1}$ and    
$\abs{B_d}\le \lfloor M\ee\,d^{\,\tau_1}\rfloor$.    
   
Suppose now that $j\ge\lceil \widetilde{C}\,d^{\,\tau_3}\rceil$    
but  $j\not\in B_d$, which means that    
\[   
\left(\frac{ \lambda_{d,j}}{{\rm CRI}_d}\right)^{j^{-\tau_2}}\le\frac1{\ee},   
\quad\mbox{or equivalently,}\quad    
\frac{\lambda_{d,j}}{{\rm CRI}_d}\le\exp\left(-j^{\tau_2}\right).   
\]   
This implies that   
\begin{equation}\label{eq:SPTABSjlarge}   
 \lambda_{d,j}\le \varepsilon^2\,{\rm CRI}_d \quad \mbox{if}\quad    
j\notin B_d \ \ \mbox{and}\ \ j\ge   
\max\left(\lceil\widetilde{C}\,d^{\tau_3}\rceil   
,\lceil \max(0,    
2\ln \varepsilon^{-1})^{1/\tau_2}\rceil\right).   
\end{equation}   
   
Due to \eqref{eq:infcomplabs} and \eqref{eq:infcomplnor}, our observation    
regarding $\abs{B_d}$, and \eqref{eq:SPTABSjlarge}, it follows that   
\[   
 n_{\ABS/\NOR}(\varepsilon, S_d) \le \lfloor M\ee\,d^{\,\tau_1}\rfloor    
+ \lceil \widetilde{C}\,d^{\,\tau_3}\rceil +    
\lceil\max(0,2\ln \varepsilon^{-1})^{1/\tau_2}\rceil,   
\]   
as claimed. This easily implies    
\[   
  n_{\ABS/\NOR}(\varepsilon, S_d) \le C \,d^{\,\max(\tau_1,\tau_3)}\,   
(1+ \ln\,\max(1,\varepsilon^{-1}))^{1/\tau_2}   
 \]   
for some suitably chosen $C$. Hence, EXP-SPT/PT-ABS/NOR holds.   
   
For SPT, we have $\tau_1=\tau_3=0$, and $q=\max(\tau_1,\tau_3)=0$.    
For the exponent of SPT we have $p^*\le \inf\{1/\tau_2:\ \mbox{$\tau_2$    
satisfies \eqref{eq:EXP-SPT-ABS}}\}$.    
   
\vskip 1pc   
Let us now assume that there are non-negative $C,q$, and $p$ such that     
\[   
  n_{\ABS/\NOR}(\varepsilon, S_d) \le C\,d^{\,q}\,    
(1+ \ln\,\max(1,\varepsilon^{-1}))^p   
 \]   
holds for all $d\in\NN$ and all $\varepsilon>0$. For SPT we have   
$q=0$ and $p$ can be arbitrarily close to $p^*$, say $p=p^*+\delta$   
for   
some (small) positive $\delta$.    
   
 Then    
\[   
 \lambda_{d,n_{\ABS/\NOR}(\varepsilon, S_d) +1} \le \varepsilon^2\,   
{\rm CRI}_d.      
\]   
The latter inequality holds for all $\varepsilon>0$,    
but we will use it only for $\varepsilon \in(0,1]$.   
Without loss of generality we may assume that $C\ge1$.    
   
Since the eigenvalues $\lambda_{d,j}$ are non-increasing, we have    
\begin{equation}\label{eq:SPTABSlambda}   
 \lambda_{d,\lfloor C\,d^{\,q} (1+    
\ln\,\max(1,\varepsilon^{-1}))^p \rfloor +1}    
\le \varepsilon^2\,{\rm CRI}_d.     
\end{equation}   
Let   
\[   
 j=\lfloor C\,d^{\,q} (1+ \ln\,\max(1,\varepsilon^{-1}))^p \rfloor +1.   
\]   
If we vary $\varepsilon\in (0,1]$, we see    
that $j=j_d^*,j_d^*+1, j_d^*+2,\dots$, where    
$$   
j_d^*=\lfloor C\,d^{\,q}\rfloor+1\ge2.   
$$    
Note that   
\[   
j\le C\,d^{\,q}(1+\ln\,\max(1,\varepsilon^{-1}))^p+1,                      
\]   
or equivalently,   
\[   
\varepsilon   
\le \exp\left(-\frac{1}{C^{1/p}\,d^{\,q/p}}(j-1)^{1/p} +1\right).   
\]   
For $j\ge j_d^*\ge2$ we have $(j-1)\ge j/2$ and therefore   
$$   
\varepsilon\le    
\,\ee\,\exp\left(-\frac{1}{(2C)^{1/p}\,d^{\,q/p}}   
\,j^{1/p}\right).   
$$   
By inserting into \eqref{eq:SPTABSlambda}, we see that    
\begin{equation}\label{157157}   
\frac{\lambda_{d,j}}{{\rm CRI}_d}    
\le    
\ee^2\exp\left(-\frac{2}{(2C)^{1/p}\,d^{\,q/p}}\,j^{1/p}\right)   
\ \ \ \mbox{for all $j\ge j_d^*$}.   
\end{equation}   
Consequently,   
$$   
 \sum_{j=j_d^*}^\infty    
\left(\frac{\lambda_{d,j}}{{\rm CRI}_d}\right)^{j^{-\tau_2}} \le     
\ee^2\sum_{j=j_d^*}^\infty   
\exp\left(-\frac{2}{(2C)^{1/p}\,d^{\,q/p}}\,j^{1/p-\tau_2} \right).   
$$   
Choose $\tau_2 < 1/p$, or equivalently, $1/p -\tau_2 > 0$.   
Then the terms of the last sum are decreasing in $j$ and     
\begin{eqnarray*}   
\sum_{j=j_d^*}^\infty   
\exp\left(-\frac{2}{(2C)^{1/p}\,d^{\,q/p}}\,j^{1/p-\tau_2} \right)   
 &\le&   
\int_{j_d^*-1}^\infty    
\exp\left(-\frac{2}{(2C)^{1/p}\,d^{\,q/p}}\,x^{1/p-\tau_2} \right)\rd x\\   
&\le&\int_{0}^\infty    
\exp\left(-\frac{2}{(2C)^{1/p}\,d^{\,q/p}}\,x^{1/p-\tau_2} \right)\rd x.   
\end{eqnarray*}   
We now put   
$$   
B:= \frac{2}{(2C)^{1/p} d^{\,q/p}},    
\quad V:= 1/p-\tau_2,   
$$   
so the above integral equals   
\[   
 I:=\int_{0}^\infty \exp(-B\, x^V )\,\rd x.   
\]   
By substituting $t$ for $Bx^V$, we obtain    
\[   
I=B^{-1/V} \frac{1}{V} \int_{0}^\infty t^{1/V-1}\exp (-t) \rd t=    
B^{-1/V} \frac{1}{V} \, \Gamma \left(\frac{1}{V}\right),   
\]   
where   
\[\Gamma (s):=\int_0^\infty t^{s-1}\exp (-t) \rd t\]   
is the Gamma function.    
We therefore get   
$$   
 I=   
\frac{p}{1-\tau_2 p} 2^{\frac{1-p}{1-\tau_2 p}}\,    
C^{\frac{1}{1-\tau_2 p}}\,    
d^{\,\frac{q}{1-\tau_2 p}}\, \Gamma\left(\frac{p}{1-\tau_2 p}\right).   
$$

In summary,   
$$   
\sum_{j=j^*_d}^\infty    
\left(\frac{\lambda_{d,j}}{{\rm CRI}_d}\right)^{j^{-\tau_2}}    
=\mathcal{O}\left(d^{\,q/(1-\tau_2p)}\right)   
\ \ \ \mbox{with}\ \ \  j^*_d=\mathcal{O}(d^{\,q}),    
$$   
where the last two factors in the big $\mathcal{O}$ notation    
are independent of $d$.   
   
Consider now ABS.   
We see that \eqref{eq:EXP-SPT-ABS} holds for   
$\tau_1=q/(1-\tau_2p)$, $\tau_2<1/p$ and $\tau_3=q$.   
For SPT, we have $q=0$ which implies that $\tau_1=\tau_3=0$, and the   
exponent of SPT is    
$\inf\{1/\tau_2:\ \mbox{$\tau_2$ satisfies   
  \eqref{eq:EXP-SPT-ABS}}\}=p=p^*+\delta$.   
Since this holds for all positive $\delta$,    
together with the previous inequality we     
  conclude that $p^*=\inf\{1/\tau_2:\ \mbox{$\tau_2$ satisfies   
    \eqref{eq:EXP-SPT-ABS}\}}$, as claimed.    
   
Finally, for NOR we can   
  take $\widetilde{C}=1$ and $\tau_3=0$ and use the fact that   
$$   
\sum_{j=1}^\infty\left(\frac{\lambda_{d,j}}{\lambda_{d,1}}\right)^{j^{-\tau_2}}   
\le j^*_d+   
\sum_{j=j^*_d}^\infty   
\left(\frac{\lambda_{d,j}}{\lambda_{d,1}}\right)^{j^{-\tau_2}}   
=\mathcal{O}\left(d^{\,q}+d^{\,q/(1-\tau_2p)}\right),   
$$   
and \eqref{eq:EXP-SPT-ABS} holds with $\tau_1=q/(1-\tau_2p)$ for all    
$\tau_2<1/p$. The rest is done as for ABS.    
This completes the proof.   
\end{proof}   
   
   
   
\subsection{Quasi-polynomial tractability}   
   
\begin{thm}[EXP-QPT-ABS/NOR]\label{thm:exp-qpt-nor}   
\quad   
   
$\calS$ is EXP-QPT-ABS/NOR iff there exists   
$\tau>0$  such that    
\begin{equation}\label{eq:EXP-QPT-NOR}   
 M:=\sup_{d\in\NN}\ d^{-\tau}    
\sum_{j=1}^\infty   
\left[1+\tfrac12\,\ln\,\max\left(1,   
\frac{{\rm CRI}_d}{\lambda_{d,j}}   
\right)\right]^{-\tau(1+\ln\,d)} <\infty.   
\end{equation}   
If this holds then   
$$   
n_{\ABS/\NOR}(\varepsilon,S_d)\le 1+M\,d^{\tau}+   
M\,d^{\,\tau}\left[\max(0,1+\ln\,\varepsilon^{-1})   
\right]^{\tau(1+\ln\,d)},   
$$   
and the exponent of EXP-QPT-ABS/NOR is   
$$   
p^*=   
\inf\{\,\tau\,:\ \tau\ \ \mbox{satisfies \eqref{eq:EXP-QPT-NOR}}\}.   
$$    
\end{thm}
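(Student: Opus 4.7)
\emph{Plan.} The plan is to parallel the proof of Theorem~\ref{thm1}, substituting the power $j^{-\tau_2}$ with $-\tau(1+\ln d)$ and the base $\lambda_{d,j}/{\rm CRI}_d$ with the more gently-varying expression $1 + \tfrac12\ln\max(1,{\rm CRI}_d/\lambda_{d,j})$.

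For the sufficient direction, I would assume \eqref{eq:EXP-QPT-NOR} holds and fix $\varepsilon\in(0,1]$. For every index $j \le n_{\ABS/\NOR}(\varepsilon,S_d)$ one has $\lambda_{d,j} > \varepsilon^2\,{\rm CRI}_d$, and a short case check (separating ${\rm CRI}_d/\lambda_{d,j} < 1$ from ${\rm CRI}_d/\lambda_{d,j} \ge 1$) yields $1 + \tfrac12\ln\max(1,{\rm CRI}_d/\lambda_{d,j}) \le 1 + \ln\varepsilon^{-1}$. Each such $j$ therefore contributes at least $(1+\ln\varepsilon^{-1})^{-\tau(1+\ln d)}$ to the sum in \eqref{eq:EXP-QPT-NOR}, giving $n_{\ABS/\NOR}(\varepsilon,S_d) \le Md^\tau(1+\ln\varepsilon^{-1})^{\tau(1+\ln d)}$. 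For $\varepsilon\ge 1$, NOR trivially gives $n_\NOR=0$, while in the ABS case $\lambda_{d,j}\ge 1$ forces the summand to equal exactly $1$, capping such indices at $Md^\tau$. Combining these estimates and using $d^\tau \le \ee^{\tau(1+\ln d)}$ then recasts the complexity bound into $C\exp(\tau(1+\ln d)(1+\ln(1+\ln\max(1,\varepsilon^{-1}))))$, establishing EXP-QPT-ABS/NOR with exponent at most $\tau$.

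For the necessary direction, I would suppose EXP-QPT-ABS/NOR holds with exponent $p = p^*+\delta$ for arbitrary $\delta>0$. From \eqref{eq:infcomplabs}--\eqref{eq:infcomplnor} and monotonicity of the $\lambda_{d,j}$'s, setting $j := \lfloor C\exp(p(1+\ln d)(1+\ln(1+\ln\varepsilon^{-1})))\rfloor+1$ for $\varepsilon\in(0,1]$ yields $\lambda_{d,j}/{\rm CRI}_d\le \varepsilon^2$. Solving this relation for $\varepsilon$ and substituting back (mirroring the analogous manipulation in Theorem~\ref{thm1}) produces
$$
1 + \tfrac12\ln({\rm CRI}_d/\lambda_{d,j}) \ge \tfrac{1}{\ee}(j/C)^{1/(p(1+\ln d))}
$$
for every $j$ beyond a threshold $j_d^\ast = \mathcal{O}(d^p)$ chosen so that the right-hand side exceeds $1$ and ${\rm CRI}_d \ge \lambda_{d,j}$ (automatic for NOR; for ABS, this uses the EXP-QPT bound at $\varepsilon=1$). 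For any $\tau > p$, raising this inequality to the power $-\tau(1+\ln d)$ bounds each tail summand by $(\ee\,C^{1/p})^{\tau(1+\ln d)}\,j^{-\tau/p}$; since $\tau/p > 1$ the series $\sum_j j^{-\tau/p}$ converges to a $d$-independent constant, so the tail of the sum in \eqref{eq:EXP-QPT-NOR} is $\mathcal{O}(d^\tau)$. The initial $j_d^\ast - 1$ terms contribute at most $j_d^\ast = \mathcal{O}(d^p) = \mathcal{O}(d^\tau)$. Dividing by $d^\tau$ verifies \eqref{eq:EXP-QPT-NOR} for every $\tau > p$, so the infimum of admissible $\tau$ is at most $p^*+\delta$; letting $\delta\to 0$ delivers the claimed exponent identity.

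The main hurdle is the ABS case, where eigenvalues exceeding $1$ produce summands identically equal to $1$: these indices must be isolated and counted via the $\varepsilon=1$ case of the EXP-QPT bound before being absorbed into $j_d^\ast$. Apart from this and routine constant-chasing for integer rounding, the argument is a direct adaptation of Theorem~\ref{thm1}.
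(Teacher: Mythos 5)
Your proposal is correct and follows essentially the same route as the paper: a Chebyshev-type counting argument (lower-bounding each summand with index $j\le n_{\ABS/\NOR}(\varepsilon,S_d)$ by $(1+\ln\varepsilon^{-1})^{-\tau(1+\ln d)}$) for sufficiency, and for necessity inverting the QPT bound to get $1+\tfrac12\ln({\rm CRI}_d/\lambda_{d,j})\ge \bigl((j-1)/(C\ee^p d^{\,p})\bigr)^{1/(p(1+\ln d))}$ and summing $j^{-\tau/p}$ for $\tau>p$. The only blemish is the constant in your tail bound: raising to the power $-\tau(1+\ln d)$ gives $\ee^{\tau}\,d^{\,\tau}\,C^{\tau/p}\,(j-1)^{-\tau/p}$, not $(\ee\,C^{1/p})^{\tau(1+\ln d)}\,j^{-\tau/p}$ (the latter carries a spurious factor $C^{\tau\ln d/p}=d^{\,\tau\ln C/p}$), and with the corrected constant the tail is indeed $\mathcal{O}(d^{\,\tau})$ as you claim.
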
   
   
\begin{proof}   
Let us first assume that \eqref{eq:EXP-QPT-NOR} holds. We then need to show   
that for some $C,p>0$ we have    
\[   
  n_{\ABS/\NOR}(\varepsilon, S_d) \le C    
\exp[\,p\,(1+\ln\,d)\,(1+\ln (1+    
\ln\,\max(1,\varepsilon^{-1})))]   
\]   
for all $\varepsilon>0$ and $d\in \NN$. Let   
$$   
j^*_1(d)=|\{\,j\in\NN\,:\ {\rm CRI}_d/\lambda_{d,j}<1\,\}|.   
$$   
Note that $j^*_1(d)=0$ for NOR, whereas $j^*_1(d)$ may be positive for   
ABS.   
   
{}From \eqref{eq:EXP-QPT-NOR} we conclude that   
$$   
M\ge d^{-\tau}\,\sum_{j=1}^{j^*_1(d)}1=d^{-\tau}\,j^*_1(d).   
$$    
Hence,   
$$   
j^*_1(d)\le M\,d^{\,\tau}\ \ \ \mbox{for both ABS and NOR}.   
$$   
For $j>j^*_1(d)$ we have ${\rm CRI}_d/\lambda_{d,j}\ge1$ and    
$$   
1+\tfrac12\,\ln\,\max\left(1,\ln\,\frac{{\rm CRI}_d}{\lambda_{d,j}}\right)   
=1+\tfrac12\,\ln\,\frac{{\rm CRI}_d}{\lambda_{d,j}}\ge1.   
$$   
Again due to \eqref{eq:EXP-QPT-NOR}, we have   
$$   
\sum_{j=j_1^*(d)+1}^\infty   
\left[1+\tfrac12\,\ln\,   
\frac{{\rm CRI_d}}{\lambda_{d,j}}\right]^{-\tau(1+\ln\,d)}   
\le M\,d^{\,\tau}.   
$$   
Note that the terms of the last sum are non-increasing. Therefore   
$$   
\left(n-j_1^*(d)\right)   
\left[1+\tfrac12\,\ln\,   
\frac{{\rm CRI_d}}{\lambda_{d,n}}\right]^{-\tau(1+\ln\,d)}   
\le M\,d^{\,\tau}.   
$$   
After simple algebraic manipulations we conclude that   
$$   
\sqrt{\frac{\lambda_{d,n}}{{\rm CRI_d}}}\le\ee\,   
\exp\left(-\left[\frac{n-j_1^*(d)}{M\,d^{\,\tau}}\right]^{1/(\tau(1+\ln\,d))}   
\right).   
$$   
We now assume that $\varepsilon\in(0,\ee)$.    
Hence, the right-hand side of the last inequality is at most   
$\varepsilon$ for    
$$   
n\ge   
j_1^*(d)+M\,d^{\,\tau}   
\left[1+\ln\,\varepsilon^{-1})   
\right]^{\tau(1+\ln\,d)}.   
$$   
Using the estimate for $j_1^*(d)$, this means that    
$$   
n:=n_{\ABS/\NOR}(\varepsilon,S_d)\le 1+M\,d^{\tau}+   
M\,d^{\,\tau}\left[1+\ln\,\varepsilon^{-1}   
\right]^{\tau(1+\ln\,d)},   
$$   
as claimed.    
   
This can be slightly overestimated by   
$$   
 n\le   
(1+M)\exp(\tau\,\ln\,d)+M   
\exp\left(\tau(1+\ln\,d)(1+\ln(1+\ln\,\varepsilon^{-1}))\right).    
$$   
It is easy to check that   
$$   
\ln\,\left(1+\ln\,\varepsilon^{-1}\right)   
\le 1+\ln(1+\ln\,\max(1,\varepsilon^{-1}))   
$$   
and therefore    
\[   
  n_{\ABS/\NOR}(\varepsilon, S_d) \le    
(1+2M)\,   
\exp\left(\tau(1+\ln\,d)(1+   
\ln(1+\ln\,\max(1,\varepsilon^{-1})))\right).   
 \]   
This means that EXP-QPT-ABS/NOR holds.      
Furthermore, the exponent of EXP-QPT-ABS/NOR is at most   
$\inf\{\,\tau\,: \ \mbox{$\tau$ satisfies    
\eqref{eq:EXP-QPT-NOR}}\,\}$.     
   
\bigskip   
   
Assume now that EXP-QPT-ABS/NOR holds, i.e., for some $C\ge1$ and   
positive~$p$ we have    
\[   
  n_{\ABS/\NOR}(\varepsilon, S_d) \le C    
\exp\left[p\,(1+\ln\,d)\,(1+\ln(1+ \ln\,\max(1,\varepsilon^{-1}))) \right]   
 \]   
holds for all $d\in\NN$ and all $\varepsilon>0$. This can be rewritten as   
\[   
 n_{\ABS/\NOR}(\varepsilon, S_d) \le C    
\ee^{\,p} d^{\,p} [1+\ln\,\max(1,\varepsilon^{-1})]^{p(1+\ln\,d)}.   
\]   
We have   
\[   
 \lambda_{d,n_{\ABS/\NOR}(\varepsilon, S_d) +1} \le    
\varepsilon^2 {\rm CRI}_d.    
\]   
Since the eigenvalues $\lambda_{d,j}$ are non-increasing, we have    
\begin{equation}\label{eq:QPTlambda}   
 \lambda_{d,\lfloor C \ee^{\,p} d^{\,p}    
[1+\ln\,\max(1,\varepsilon^{-1})]^{p(1+\ln\,d)}    
\rfloor +1} \le \varepsilon^2 {\rm CRI}_d.    
\end{equation}   
Although the estimate of $n_{\ABS/\NOR}(\varepsilon,S_d)$ holds for   
all $\varepsilon>0$, we assume that $\varepsilon\in(0,1]$.
   
If we vary $\varepsilon\in (0,1]$, we see that    
$$   
j=\lfloor C \ee^{\,p} d^{\,p}    
[1+\ln\,\varepsilon^{-1}]^{p(1+\ln\,d)} \rfloor +1   
$$    
attains the values    
$j=j_d,j_d+1,\dots$, where   
$$   
j_d=\left\lfloor C \ee^{\,p}   
d^{\,p}\right\rfloor+1\ge 2.   
$$   
Furthermore, we have   
\[   
 j\le C \ee^{\,p} d^{\,p}    
[1+\ln\,\varepsilon^{-1}]^{p(1+\ln\,d)}  +1   
\]   
or equivalently,   
\[   
\varepsilon    
\le \exp\left(-\left(\frac{j-1}{C \ee^{\,p} d^{\,p}}   
\right)^{1/(p(1+\ln\,d))} +1\right).   
\]   
Inserting this into \eqref{eq:QPTlambda} we conclude that   
$$   
1+\tfrac12\,\ln\,\frac{{\rm CRI}_d}{\lambda_{d,j}}\ge
\left(\frac{j-1}{C\,\ee^p\,d^{\,p}}\right)^{1/(p(1+\ln\,d)} 
\quad\mbox{for all}\quad j\ge j_d.   
$$   
Therefore  
$$
\left(1+\tfrac12\,\ln\,\frac{{\rm   
      CRI}_d}{\lambda_{d,j}}\right)^{-\tau(1+\ln\,d)}\le   
C^{\,\tau/p}\,\ee^{\tau}\,d^{\,\tau}\,(j-1)^{-\tau/p}   
\quad\mbox{for all}\quad j\ge j_d.   
$$

Finally,    
$$   
\sum_{j=1}^\infty   
\left[1+\tfrac12\,\ln\,\max\left(1,\frac{{\rm CRI}_d}{\lambda_{d,j}}   
\right)\right]^{-\tau(1+\ln\,d)}\le j_d+   
C^{\,\tau/p}\,\ee^\tau\,d^{\,\tau}\,\sum_{j=j_d+1}^\infty (j-1)^{-\tau/p}.   
$$   
The last series is finite if we take $\tau>p$. Therefore   
$$   
M=\sup_{d\in\NN}\,d^{-\tau}   
\sum_{j=1}^\infty   
\left[1+\tfrac12\,\ln\,\max\left(1,\frac{{\rm CRI}_d}{\lambda_{d,j}}   
\right)\right]^{-\tau(1+\ln\,d)}<\infty,   
$$   
as claimed.    
   
Furthermore, the infimum of $\tau$ satisfying \eqref{eq:EXP-QPT-NOR}    
is at most $p$ and $p$ can be arbitrarily close to the exponent   
of EXP-QPT-ABS/NOR. Hence, $p^*   
=\inf\{\,\tau\,:\ \mbox{$\tau$ satisfies    
\eqref{eq:EXP-QPT-NOR}}\,\}/$.   
This completes the proof.   
\end{proof}

\subsection{$(s,t)$-weak tractability}   
   
\begin{thm}[EXP-$(s,t)$-WT-ABS/NOR]\label{thm:exp-wt-nor}   
\quad   
   
$\calS$ is EXP-$(s,t)$-WT-ABS/NOR iff   
\begin{equation}\label{eq:EXP-WT}   
\mu (c,s,t):=\sup_{d\in\NN} \sigma (c,d,s)    
\exp (-cd^{\,t}) < \infty\quad \forall c>0,   
\end{equation}   
where   
\[   
 \sigma (c,d,s):=\sum_{j=1}^\infty    
\exp \left(-c \left[   
1+   
\ln\left(2\,\max\left(1,\frac{{\rm   
        CRI}_d}{\lambda_{d,j}}\right)\right)   
\right]^s \right).   
\]   
\end{thm}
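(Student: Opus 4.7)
The proof handles ABS and NOR uniformly via $\mathrm{CRI}_d$ and splits into two implications. Throughout, write $L_{d,j} := 1 + \ln(2\max(1, \mathrm{CRI}_d/\lambda_{d,j}))$, so the $j$th summand in $\sigma(c,d,s)$ is $\exp(-cL_{d,j}^s)$. Two elementary observations drive everything: $L_{d,j} \ge 1+\ln 2$ (so every summand is at most $1$), and $L_{d,j}$ is non-decreasing in $j$ because the eigenvalues are non-increasing.

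For sufficiency, assume the series bound. Monotonicity of the summands gives
\[
n\,\exp(-cL_{d,n}^s) \;\le\; \sigma(c,d,s) \;\le\; \mu(c,s,t)\,\exp(cd^t)
\]
for every $n \ge 1$. Applied with $n = n_{\ABS/\NOR}(\varepsilon, S_d) \ge 1$, the defining minimality $\lambda_{d,n} > \varepsilon^2 \mathrm{CRI}_d$ yields $L_{d,n} \le 1 + \ln 2 + 2\ln\max(1, \varepsilon^{-1}) \le 2(1 + \ln\max(1, \varepsilon^{-1}))$. Taking logarithms and rearranging,
\[
\ln \max(1, n_{\ABS/\NOR}(\varepsilon, S_d)) \;\le\; \ln \mu(c,s,t) + c d^t + 2^s c\,(1 + \ln\max(1, \varepsilon^{-1}))^s.
\]
Dividing by $d^t + (1 + \ln \max(1, \varepsilon^{-1}))^s$ and letting $d + \varepsilon^{-1} \to \infty$ gives a $\limsup$ bounded by $2^s c$; arbitrariness of $c > 0$ forces the WT limit to vanish.

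For necessity, assume EXP-$(s,t)$-WT, fix $c > 0$, and choose $\eta \in (0, c/2)$. By WT there exists $K_\eta$ such that $d + \varepsilon^{-1} \ge K_\eta$ entails $\ln\max(1, n_{\ABS/\NOR}(\varepsilon, S_d)) \le \eta[d^t + (1 + \ln\max(1, \varepsilon^{-1}))^s]$. Let $j_0(d) := \#\{j : \lambda_{d,j} > \mathrm{CRI}_d\}$; this vanishes for NOR and equals $n_{\ABS}(1, S_d)$ for ABS, so $j_0(d) \le \exp(\eta(d^t + 1))$ for all sufficiently large $d$, and the first $j_0(d)$ summands of $\sigma$ contribute at most $1$ each. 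For $j > j_0(d)$, set $\varepsilon_j := \sqrt{\lambda_{d,j}/(2\mathrm{CRI}_d)}$. Since $\varepsilon_j^2 \mathrm{CRI}_d < \lambda_{d,j}$, monotonicity of the eigenvalues forces $n_{\ABS/\NOR}(\varepsilon_j, S_d) \ge j$, and a direct computation yields the exact identity $L_{d,j} + 1 = 2(1 + \ln\max(1, \varepsilon_j^{-1}))$, hence $1 + \ln\max(1, \varepsilon_j^{-1}) \le L_{d,j}$. Inserting into the WT bound gives $\ln j \le \eta d^t + \eta L_{d,j}^s$, so $\exp(-cL_{d,j}^s) \le j^{-c/\eta}\exp(cd^t)$. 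Summing and using $c/\eta > 2$,
\[
\sigma(c,d,s) \;\le\; O(1) \;+\; \exp(\eta(d^t + 1)) \;+\; \exp(cd^t)\sum_{j \ge 1} j^{-c/\eta},
\]
so $\sigma(c,d,s)\exp(-cd^t)$ is bounded in $d$ and $\mu(c,s,t) < \infty$.

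The main obstacle is the necessity direction: one must pick the auxiliary $\varepsilon_j$ so that $1 + \ln\max(1, \varepsilon_j^{-1})$ relates to $L_{d,j}$ by an \emph{exact} factor-of-two identity, since any looser comparison costs a multiplicative constant in the exponent that ruins convergence of the resulting $j$-sum for small $\eta$. The ABS-specific block $j \le j_0(d)$ (handled by invoking WT at $\varepsilon = 1$) and the finitely many indices $(d,j)$ where $d + \varepsilon_j^{-1} < K_\eta$ (absorbed into the $O(1)$ term) round out the argument.
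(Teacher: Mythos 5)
Your proof is correct and follows essentially the same route as the paper's: monotonicity of the summands combined with a Chebyshev-type bound for sufficiency, and substitution of $\varepsilon_j\sim\sqrt{\lambda_{d,j}/(2\,\mathrm{CRI}_d)}$ for necessity, with the ABS-specific block of eigenvalues exceeding $\mathrm{CRI}_d$ treated separately. Your sufficiency direction is in fact slightly cleaner than the paper's, since evaluating the monotonicity bound directly at $n=n_{\ABS/\NOR}(\varepsilon,S_d)$ and invoking minimality ($\lambda_{d,n}>\varepsilon^2\,\mathrm{CRI}_d$) avoids the paper's explicit construction of the index $j_{\varepsilon,d}$ and its separate handling of the count $j_1^*(d)$ in that half of the argument.
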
   
\begin{proof}   
First of all, note that \eqref{eq:EXP-WT} combines the formulas in   
Table 4 for EXP-$(s,t)$-WT-ABS/NOR. Indeed, for ABS, we have    
${\rm CRI}_d=1$ and     
$$   
1+\ln\left(2\,\max\left(1,\frac{{\rm   
        CRI}_d}{\lambda_{d,j}}\right)\right)=   
1+\ln\left(2\,\max\left(1,\frac1{\lambda_{d,j}}\right)\right),   
$$   
whereas for NOR, we have ${\rm CRI}_d=\lambda_{d,1}$ and    
${\rm CRI}_d/\lambda_{d,j}\ge1$. This yields   
$$   
1+\ln\left(2\,\max\left(1,\frac{{\rm   
        CRI}_d}{\lambda_{d,j}}\right)\right)=   
1+\ln\left(2\,\frac{\lambda_{d,1}}{\lambda_{d,j}}\right).   
$$   
Let us first assume that \eqref{eq:EXP-WT} holds.    
We then need to show   
\[   
  \lim_{d + \varepsilon^{-1}\rightarrow\infty}\,    
\frac{\ln \max(1, n_{\ABS/\NOR}(\varepsilon, S_d)) }{d^{\,t}   
+ (1+ \ln\,\max(1,\varepsilon^{-1}))^s}=0.   
\]   
The terms in $\sigma (c,d,s)$ are non-increasing, so   
we have    
\[\exp (-c d^{\,t})\, j\,    
\exp\left(-c\left[1+\ln    
\left(2\,\max\left(1,\frac{{\rm CRI}_d}{\lambda_{d,j}}\right)\right)   
\right]^s \right)\,\le\, \mu (c,s,t).   
\]   
Equivalently,   
\[   
 \exp \left(c\left[1+\ln    
\left(2\,\max\left(1,\frac{{\rm CRI}_d}{\lambda_{d,j}}\right)\right)   
\right]^s\right)\ge   
 \frac{j}{\mu (c,s,t) \exp (cd^{\,t})}.   
\]   
In particular, for $j> \mu (c,s,t)\exp (cd^{\,t})$ we obtain   
\[   
 1+\ln\left(2\,\max\left(1,\frac{{\rm CRI}_d}{\lambda_{d,j}}\right)\right)   
\ge \left(\frac{\ln (j / (\mu (c,s,t) \exp (cd^{\,t}))}{c}\right)^{1/s},   
\]   
or, equivalently,   
\[   
\min\left(1, \frac{\lambda_{d,j}}{{\rm CRI}_d}\right)\le    
2\exp\left(1-\left(\frac{\ln (j / (\mu (c,s,t)    
\exp (cd^{\,t}) )}{c}\right)^{1/s}\right).   
\]   
   
Let now $\varepsilon>0$. We have   
\[   
  2\exp\left(1-\left(\frac{\ln (j / (\mu (c,s,t)    
\exp (cd^{\,t}) )}{c}\right)^{1/s}\right)\le \varepsilon^2   
\]   
iff   
\[   
 j\ge \mu (c,s,t)    
\exp\left(c\left(\left[\max\left(0,1+ \ln \frac{2}{\varepsilon^2}\right)   
\right]^s + d^{\,t}\right)\right).   
\]   
Therefore, if   
\begin{equation}\label{eq:WTnlarge}   
 j_{\varepsilon,d}:=\left\lceil \max(1,\mu (c,s,t))    
\exp\left(c\left(\left[\max\left(0,1+ \ln \frac{2}{\varepsilon^2}\right)   
\right]^s + d^{\,t}\right)\right)\right\rceil   
\end{equation}   
we have   
\[   
 \min\left(1,\frac{\lambda_{d,j_{\varepsilon,d}}}{{\rm CRI_d}}\right)   
\le \varepsilon^2.   
\]   
We now estimate $j_{\varepsilon,d}$.    
Since $\max(1,\mu (c,s,t))\ge 1$,    
the argument of the ceiling function in the right-hand side of    
\eqref{eq:WTnlarge} is also    
at least 1 and we can use $\lceil x\rceil\le 2x$ for all $x\ge1$, so that    
$$   
j_{\varepsilon,d}\le 2\max(1,\mu(c,s,t))   
\exp\left(c\left(\left[\max\left(0,1+ \ln \frac{2}{\varepsilon^2}\right)   
\right]^s + d^{\,t}\right)\right).   
$$   
It is easy to check that    
$$   
\max\left(0,1+\ln\,\frac2{\varepsilon^2}\right)\le    
2\left(1+\ln\,\max(1,\varepsilon^{-1})\right)   
\ \ \ \mbox{for all   
$\varepsilon>0$.}   
$$   
Hence,   
$$   
j_{\varepsilon,d}\le 2\max(1,\mu(c,s,t))\,   
\exp\left(2^sc\left(\left(1+\ln\,\max(1,\varepsilon^{-1})   
\right)^s+d^{\,t}\right)\right)   
$$   
which can be abbreviated as   
$$   
j_{\varepsilon,d}=\mathcal{O}\left(\exp\left(2^sc\left(1+   
\ln\,\max(1,\varepsilon^{-1})\right)^s+d^{\,t}\right)\right),   
$$   
where the factor in the big $\mathcal{O}$ notation is independent of   
$\varepsilon^{-1}$ and $d$.    
   
For NOR, we have   
\[   
\min\left(1,\frac{\lambda_{d,j_{\varepsilon,d}}}{{\rm CRI_d}}\right)=   
 \frac{\lambda_{d,j_{\varepsilon,d}}}{{\lambda_{d,1}}}\le \varepsilon^2,   
\]   
and therefore   
$$   
n_{\NOR}(\varepsilon,S_d)\le j_{\varepsilon,d}=   
\mathcal{O}\left(\exp\left(2^s c\,   
\left(1+\ln\,\max(1,\varepsilon^{-1})\right)^s   
+d^{\,t}\right)\right).   
$$    
Since this holds for all $c>0$, we obtain EXP-$(s,t)$-WT-NOR.   
   
For ABS, let   
$$   
j_1^*(d)=|\{\,j\in\NN\,:\ \lambda_{d,j}>1\,\}|.   
$$   
Then    
$$   
\mu(c,s,t)\ge\exp(-cd^{\,t})\,\sum_{j=1}^{j^*_1(d)}   
\exp(-c(1+\ln\,2)^s)=   
\exp\left(-c(d^{\,t}+(1+\ln2)^s)\right)\,j_1^*(d).   
$$   
Hence,   
$$   
j^*_1(d)\le \mu(c,s,t)\,\exp\left(c((1+\ln2)^s+d^{\,t})\right)=   
\mathcal{O}\left(\exp(cd^{\,t})\right),   
$$   
again with the factor in the big $\mathcal{O}$ notation independent of   
$d$.    
{}Note that    
$$   
\max\left(j^*_1(d),j_{\varepsilon,d}\right)   
=\mathcal{O}\left(\exp\left(   
2^sc\left(\left(1+\ln\,\max(1+\varepsilon^{-1})\right)^s+d^{\,t}   
\right)\right)\right).   
$$    
{}For $j=\max\left(j^*_1(d)+1,j_{\varepsilon,d}\right)$    
we have    
\[   
\min\left(1,\frac{\lambda_{d,j}}{{\rm CRI_d}}\right)=   
 \lambda_{d,j}\le \varepsilon^2.   
\]   
Therefore,    
$$   
n_{\ABS}(\varepsilon, S_d)\le j=   
\mathcal{O}\left(\exp(2^sc\,   
(1+\ln\,\max(1,\varepsilon^{-1}))^s   
+d^{\,t})\right).   
$$   
Since this holds for all choices of $c>0$, we obtain EXP-$(s,t)$-WT-ABS.    
   
\vskip 1pc   
Let us now assume that we have EXP-$(s,t)$-WT-ABS/NOR, i.e.,   
\[   
  \lim_{d + \varepsilon^{-1}\rightarrow\infty}\,    
\frac{\ln\max(1,n_{\ABS/\NOR}(\varepsilon, S_d)) }   
{d^{\,t}+ (1+ \ln\,\max(1,\varepsilon^{-1}))^s}=0.   
\]   
Then for any $c>0$ there exists an integer $C=C(c,s,t)$ such that   
\[   
 n:=n_{\ABS/\NOR}(\varepsilon, S_d) \le    
\left\lfloor \exp\left( c\left(   
\left[1+ \ln\,\max(1,\varepsilon^{-1})\right]^s   
+ d^{\,t}\right)  \right)  \right\rfloor   
\]   
for all choices of $\varepsilon^{-1}+d\ge C$.    
   
For $d\in\NN$, choose $\varepsilon>0$ such that $\varepsilon^{-1} \ge    
\max(1,C-d)$.    
Since the eigenvalues $\lambda_{d,j}$ are non-increasing, we have    
\begin{equation}\label{eq:WTlambda}   
 \lambda_{d,\left\lfloor\exp\left( c\left(   
\left[1+ \ln\,\max(1,\varepsilon^{-1})\right]^s   
+ d^{\,t}\right)  \right)  \right\rfloor+1} \le    
\varepsilon^2 {\rm CRI}_d.   
\end{equation}   
Let    
\[   
j=\left\lfloor\exp\left( c\left(   
\left[1+ \ln\,\max(1,\varepsilon^{-1})\right]^s   
+ d^{\,t}\right)  \right)  \right\rfloor+1,   
\]   
and   
\[   
k^*_1(d):=\left\lfloor\exp\left( c\left(   
\left[1+ \ln\,\max(1,C-d))\right]^s   
+ d^{\,t}\right)  \right)  \right\rfloor+1=\Theta\left(\exp(cd^{\,t})\right)   
\]   
for all $d$ with the factor in the $\Theta$ notation   
independent of $d$.   
   
If we vary $\varepsilon^{-1}\in [\max(1,C-d),\infty)$,    
$j$ will attain any integer value greater than or equal to $k_1^*(d)$.    
Furthermore we have   
\[   
 j\le \exp\left( c\left(   
\left[1+ \ln\,\max(1,\varepsilon^{-1})\right]^s   
+ d^{\,t}\right)  \right) +1,   
\]   
or equivalently,   
\[   
\varepsilon   
\le    
\exp\left(- \left(\frac{\ln ((j-1)/\exp (cd^{\,t}))}{c}\right)^{1/s} +1\right)   
\ \ \    
\mbox{for any $j\ge k_1^*(d)$}.   
\]   
Therefore,    
by inserting into \eqref{eq:WTlambda}, we see that    
for all    
$$   
j\ge k_1^{*}(d)=\Theta\left(\exp(cd^{\,t})\right)   
$$   
we have     
\[   
 \frac{\lambda_{d,j}}{{\rm CRI}_d} \le    
\exp\left(- 2\left(\frac{\ln ((j-1)/\exp (cd^{\,t}))}{c}   
\right)^{1/s} +2\right).   
\]   
The latter inequality is equivalent to    
\[   
 c\left[1-\frac{1}{2} \ln \frac{\lambda_{d,j}}{   
{\rm CRI}_d}\right]^s \ge    
 \ln ((j-1)/\exp (cd^{\,t})),   
\]   
which, in turn, is equivalent to    
\[   
 c\left[1+\frac{1}{2} \ln \frac{   
{\rm CRI}_d}{\lambda_{d,j}}\right]^s \ge    
 \ln ((j-1)/\exp (cd^{\,t})).   
\]   
The last inequality holds iff   
\[   
 \exp \left(-2c \left[1+\frac{1}{2} \ln    
\frac{   
{\rm CRI}_d}{\lambda_{d,j}}\right]^s\right)   
\exp (-2cd^{\,t})\le \frac{1}{(j-1)^2}.   
\]   
We are ready to estimate    
$$   
 \exp(-2cd^{\,t})\,\sigma (2c,d,s)=   
\exp(-2cd^{\,t})\,\sum_{j=1}^\infty    
\exp \left(-2c \left[1+\ln\left(2\,\max\left(1,   
\frac{{\rm CRI}_d}{\lambda_{d,j}}\right)\right)\right]^s\right).   
$$   
   
{}For NOR, we have $\max(1,{\rm   
  CRI}_d/\lambda_{d,j})=\lambda_{d,1}/\lambda_{d,j}$ and   
$$   
1+\ln\,\frac{2\lambda_{d,1}}{\lambda_{d,j}}\ge 1+\tfrac12\,\ln\,   
\frac{   
\lambda_{d,1}}{\lambda_{d,j}}.   
$$   
Therefore,   
\begin{eqnarray*}   
 \exp(-2cd^{\,t})\,\sigma (2c,d,s)&\le&   
\exp(-2cd^{\,t})\,   
\sum_{j=1}^\infty \exp \left(-2c \left[1+\frac{1}{2}\ln    
 \frac{   
\lambda_{d,1}}{\lambda_{d,j}}\right]^s \right)\\   
&\le& \exp (-2cd^{\,t}) (k^{*}_1(d)-1) + \sum_{j=k^{*}_1(d)}^\infty    
\frac{1}{(j-1)^2}.   
\end{eqnarray*}   
   
Obviously, the latter sum is bounded by $\pi^2 /6$. Furthermore,   
$$   
 \exp (-2cd^{\,t}) (k^{*}_1(d)-1)=\mathcal{O}\left(\exp(-cd^{\,t})\right).   
$$   
Hence for any $c>0$ it is true that   
$$   
 \mu (2c,s,t)=\sup_{d\in\NN} \sigma (2c,d,s) \exp (-2cd^{\,t})<\infty.   
$$   
By varying the constant $c$, we see    
the validity of \eqref{eq:EXP-WT}, finishing the proof for NOR.   
\vskip 1pc   
For ABS, as before, we consider   
$$   
j_1^*(d)=|\{\,j\ :\ \lambda_{d,j}>1\,\}|.    
$$   
Note that    
$$   
j_1^*(d)\le n_{\ABS}(1,S_d)\ \ \ \mbox{for all $d\in \NN$.}   
$$   
Furthermore, for $d\ge C(c,s,t)$,  with $C(c,s,t)$ defined as before,   
we have    
$$   
n_{\ABS}(1,S_d)\le   
\exp\left(c\left(\left(1+\ln\,2\right)^s+d^{\,t}\right)\right)=   
\mathcal{O}\left(\exp(cd^{\,t})\right).   
$$

We now estimate    
\begin{eqnarray*}   
&\ &\sigma(2c,d,s)=   
\sum_{j=1}^\infty\exp\left(-2c\left[1+\ln\left(2\,\max\left(1,   
\frac1{\lambda_{d,j}}\right)\right)\right]^s\right)\\   
&=&   
\sum_{j=1}^{j_1^*(d)}\exp\left(-2c(1+\ln\,2)^s\right)+   
\sum_{j=j^*_1(d)+1}^\infty   
\exp\left(-2c\left(1+\ln\,\frac{2}{\lambda_{d,j}}\right)^s\right)\\   
&\le&\max(j^*_1(d),k^*_1(d))   
+\sum_{j=\max(j^*_1(d),k^*_1(d))+1}^\infty   
\exp\left(-2c\left(1+\ln\,\frac{2}{\lambda_{d,j}}\right)^s\right).   
\end{eqnarray*}   
   
 Note that for $j\ge \max(j^*_1(d),k_1(d))+1$ we have   
 $$   
\lambda_{d,j}\le1\ \ \  \mbox{and}\ \ \    
1+\ln(2/\lambda_{d,j})\ge 1+   
\tfrac12\ln(1/\lambda_{d,j}).                               
$$    
Therefore, we conclude as before that   
$$   
\sum_{j=\max(j^*_1(d),k^*_1(d))+1}^\infty   
\exp\left(-2c\left(1+\ln\,\frac{2}{\lambda_{d,j}}\right)^s\right)   
\le \frac{\pi^2}6+\exp\left(cd^{\,t}\right).   
$$   
Hence,   
$$   
\exp(-2cd^{\,t})\,\sigma(2c,d,s)=\mathcal{O}\left(   
1+\exp(-2cd^{\,t}+cd^{\,t})\right)   
$$   
is uniformly bounded in $d$, and $\mu(2c,s,t)<\infty$.   
By varying the constant $c$, we conclude the proof for ABS.   
\end{proof}   
\subsection{Uniform weak tractability}   
   
\qquad   
   
We stress that we can verify UWT by checking $(s,t)$-WT for all   
positive $s$ and~$t$ by criteria presented in Table 4. The    
advantage of   
this approach is that these criteria are independent   
of the ordering of the singular    
values $\lambda_{d,j}$'s.    
   
Table 5 presents necessary and sufficient conditions on the decay of   
the ordered eigenvalues $\lambda_{d,n}$'s in order to achieve UWT.    
We need to prove these conditions for both ALG and EXP since    
the case of ALG has also not yet been considered.    
   
\begin{thm}\label{algexpuwt}   
\qquad   
\begin{itemize}   
\item   
$\calS$ is ALG-UWT-ABS/NOR \ iff   
\begin{equation}\label{997}   
\lim_{n\to\infty}\ \inf_{d\le [\ln\,n]^k}\    
\frac{\ln\, \frac{{\rm CRI}_d}{\lambda_{d,n}}}{\ln\,\ln\,n}\ =\ \infty   
\ \ \ \ \mbox{for \ all \ $k\in \NN$.}   
\end{equation}   
\item   
$\calS$ is EXP-UWT-ABS/NOR \ iff   
\begin{equation}\label{998}   
\lim_{n\to\infty}\ \inf_{d\le [\ln\,n]^k}\    
\frac{\ln\left(\max\left(1,\ln\, \frac{{\rm CRI}_d}{\lambda_{d,n}}\right)\right)}{\ln\,\ln\,n}\ =\ \infty   
\ \ \ \mbox{for \ all \ $k\in \NN$.}   
\end{equation}   
\end{itemize}   
\end{thm}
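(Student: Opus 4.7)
The plan is to prove both equivalences in parallel, using the $\mathrm{CRI}_d$ convention from the previous theorems to unify the ABS and NOR cases, and exploiting the fundamental identity $n_{\ABS/\NOR}(\varepsilon,S_d)\ge m$ iff $\lambda_{d,m} > \varepsilon^2\,\mathrm{CRI}_d$, which follows directly from \eqref{eq:infcomplabs}--\eqref{eq:infcomplnor}. Each equivalence is argued by contraposition in both directions, and the essential asymmetry between ALG and EXP shows up only in whether one takes a single or a double logarithm at the end.

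For necessity in the ALG case, suppose \eqref{997} fails: there exist $k\in\NN$, $M>0$ and sequences $n_i\to\infty$, $d_i\le [\ln n_i]^k$ with $\lambda_{d_i,n_i}/\mathrm{CRI}_{d_i}\ge (\ln n_i)^{-M}$. Setting $\varepsilon_i^2 := \tfrac12 (\ln n_i)^{-M}$ forces $n_{\ABS/\NOR}(\varepsilon_i, S_{d_i}) \ge n_i$, so $\ln n_{\ABS/\NOR}(\varepsilon_i,S_{d_i})\ge \ln n_i$, while $d_i^t = O((\ln n_i)^{kt})$ and $\max(1,\varepsilon_i^{-1})^s = O((\ln n_i)^{sM/2})$. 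Choosing $s=1/M$, $t=1/(2k)$ makes the denominator in the definition of ALG-$(s,t)$-WT of order $(\ln n_i)^{1/2}$, dominated by $\ln n_i$, so ALG-$(s,t)$-WT fails. The EXP case is analogous: failure of \eqref{998} yields $\lambda_{d_i,n_i}/\mathrm{CRI}_{d_i}\ge \exp(-(\ln n_i)^M)$, hence $1+\ln \varepsilon_i^{-1} = O((\ln n_i)^M)$, and the same dimensional analysis with $s=1/(2M)$, $t=1/(2k)$ breaks EXP-$(s,t)$-WT.

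For sufficiency, suppose UWT fails, so that for some $s,t,\eta>0$ there is a sequence $(d_i,\varepsilon_i)$ with $d_i+\varepsilon_i^{-1}\to\infty$ and $n_i:=n_{\ABS/\NOR}(\varepsilon_i,S_{d_i})$ satisfying $\ln n_i\ge \eta(d_i^t + f(\varepsilon_i)^s)$, where $f(\varepsilon)=\max(1,\varepsilon^{-1})$ in the ALG case and $f(\varepsilon)=1+\ln\max(1,\varepsilon^{-1})$ in the EXP case. The $d_i^t$ term gives $d_i\le [\ln n_i]^k$ for some $k$ depending only on $t$, and the $f(\varepsilon_i)^s$ term yields $f(\varepsilon_i)\le (\eta^{-1}\ln n_i)^{1/s}$. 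From $n_i\ge 1$ and the fundamental equivalence, $\mathrm{CRI}_{d_i}/\lambda_{d_i,n_i}<\varepsilon_i^{-2}$, hence $\ln(\mathrm{CRI}_{d_i}/\lambda_{d_i,n_i})<2\ln\varepsilon_i^{-1}$. Taking one further logarithm in the EXP case, the ratio in \eqref{997} (respectively \eqref{998}) at $(d_i,n_i)$ is bounded by a constant depending only on $s$, so the infimum over $d\le [\ln n_i]^k$ of the same ratio stays bounded along $n_i\to\infty$, contradicting the asserted decay condition.

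The main technical obstacle is the boundary case $\varepsilon_i\ge 1$ in sufficiency. In the NOR setting this case cannot arise because $\lambda_{d,1}$ is the largest eigenvalue, so $n_{\NOR}(\varepsilon,S_d)=0$ for $\varepsilon\ge 1$. In the ABS setting, $f(\varepsilon_i)$ collapses to a constant and one must rely on $d_i\to\infty$ to drive $\ln n_i\to\infty$; moreover $\lambda_{d_i,n_i}>\varepsilon_i^2\ge 1$ gives $\ln(\mathrm{CRI}_{d_i}/\lambda_{d_i,n_i})\le 0$, so the numerator in \eqref{997} is non-positive and the $\max(1,\cdot)$ inside \eqref{998} pins the EXP numerator to zero, making the ratio automatically bounded. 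Beyond this boundary, the proof is careful bookkeeping of single-versus-double logarithms and polynomial-versus-exponential growth that separates the ALG and EXP formulations.
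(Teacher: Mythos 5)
Your proof is correct, but it takes a genuinely different route from the paper's. The paper derives both equivalences from the summability characterizations of $(s,t)$-WT (Table 4 for ALG, Theorem~\ref{thm:exp-wt-nor} for EXP): for necessity it exploits the monotonicity of the summands to bound $n$ times the $n$th term by $M_{c,s,t}\exp(cd^{\,t})$ and then solves for ${\rm CRI}_d/\lambda_{d,n}$ with the choice $t=1/(2k)$; for sufficiency it converts the decay condition into a bound $\exp(-c(\ln n)^2)\le n^{-2}$ on the tail of the series beyond the cutoff $N^*=\mathcal{O}(\exp(d^{\,1/k}))$ and takes $k>1/t$. You bypass those series characterizations entirely and argue both implications by contraposition directly from the definition of $(s,t)$-WT and the eigenvalue formulas \eqref{eq:infcomplabs}--\eqref{eq:infcomplnor}, i.e.\ from the equivalence $n_{\ABS/\NOR}(\varepsilon,S_d)\ge m \iff \lambda_{d,m}>\varepsilon^2\,{\rm CRI}_d$, with tuned parameters ($\varepsilon_i^2=\tfrac12(\ln n_i)^{-M}$ resp.\ $\tfrac12\exp(-(\ln n_i)^M)$, $s=1/M$ resp.\ $1/(2M)$, $t=1/(2k)$, and $k$ determined by $t$ in the converse). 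This buys you a more elementary and self-contained argument --- in particular, for ALG you do not need to invoke the characterization from \cite{WW17} quoted in Table~4 --- at the cost of the explicit boundary analysis for $\varepsilon_i\ge1$ in the ABS case and of tracking which of $d_i$, $\varepsilon_i^{-1}$ tends to infinity, both of which you handle correctly. One step worth spelling out when you write this up: in your sufficiency argument the boundedness of the ratio in \eqref{997} comes from $\ln\varepsilon_i^{-1}\le\ln f(\varepsilon_i)\le\tfrac1s\ln(\eta^{-1}\ln n_i)$, i.e.\ from taking the logarithm of your bound on $f(\varepsilon_i)$ rather than using $f(\varepsilon_i)$ itself; as phrased the conclusion is right, but that intermediate inequality deserves a displayed line.
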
   
\begin{proof}   
We first consider ALG. Assume that we have    
ALG-UWT-ABS/NOR. We need to show \eqref{997}.   
Since $\calS$ is ALG-$(s,t)$-WT-ABS/NOR for all positive $s$ and $t$,    
due to Table 4 we have    
for all positive $c$,    
$$   
M_{c,s,t}:=\sup_{d\in\NN}\,\exp(-cd^{\,t})\,\sum_{j=1}^\infty   
\exp\left(-c\left(\frac{{\rm   
        CRI}_d}{\lambda_{d,j}}\right)^{s/2}\right)<\infty.   
$$   
Since the terms $\exp\left(-c({\rm CRI}_d/\lambda_{d,j})^{s/2}\right)$ are   
non-increasing, we obtain   
$$   
\exp(-cd^{\,t})\,n\exp\left(-c\left({\rm CRI}_d/\lambda_{d,n}   
\right)^{s/2}\right)\le M_{c,s,t}.   
$$   
Hence,   
$$   
\exp\left(-c\left(\frac{{\rm CRI}_d}{\lambda_{d,n}}\right)^{s/2}\right)\le   
\frac{M_{c,s,t}\exp(cd^{\,t})}{n},   
$$   
and by taking the logarithms we conclude   
$$   
\left(\frac{{\rm CRI}_d}{\lambda_{d,n}}\right)^{s/2}\ge   
\frac{\ln\,n-\ln(M_{c,s,t})-cd^{\,t}}{c}.   
$$   
Take now an arbitrary (large) integer $k$. For this $k$, we choose   
$t=1/(2k)$. Then there exists $n_{c,s,t}\ge 2$ such that for all    
$n\ge n_{c,s,t}$ and $d\le [\ln\,n]^k$ we have    
$$   
\frac{\ln\,n-\ln(M_{c,s,t})-cd^{\,t}}{c}\ge   
\frac{\ln\,n-\ln(M_{c,s,t})-c(\ln\,n)^{1/2}}{c}\ge (\ln n)^{1/2}.   
$$   
Using this estimate we conclude that   
$$   
\inf_{d\le [\ln\,n]^k}   
\frac{{\rm CRI}_d}{\lambda_{d,n}}\ge (\ln\, n)^{1/s},   
$$   
and by taking again the logarithms   
$$   
\inf_{d\le[\ln\,n]^k}   
\frac{\ln\left(\frac{{\rm CRI}_d}{\lambda_{d,n}}\right)}   
{\ln\,\ln\,n}   
\ge \frac1s\ \ \ \mbox{for all $n\ge n_{c,s,t}$}.   
$$   
Since $s$ can be arbitrarily small, the left hand side of the last   
inequality is arbitrarily large for large $n$. This means that the limit in   
\eqref{997} is infinity, as claimed.     
   
We now assume that \eqref{997} holds. We need to prove   
ALG-$(s,t)$-WT-ABS/NOR for all positive $s$ and $t$.    
Due to Table 4, we need to show that   
$$   
\sup_{d\in\NN}\,   
\exp(-cd^{\,t})\,   
\sum_{j=1}^\infty   
\exp\left(-c\left(\frac{{\rm   
        CRI}_d}{\lambda_{d,j}}\right)^{s/2}\right)<\infty   
\ \ \ \mbox{for all $c>0$}.   
$$   
Take an arbitrary (small) positive $c$. From \eqref{997} we know that   
for all $k\in \NN$ and $M>0$ there exists an integer $N(k,M)\ge 3$ such that   
$$   
\frac{\ln\left(\frac{{\rm CRI}_d}{\lambda_{d,n}}\right)}   
{\ln\,\ln\,n}\ge M\ \ \ \mbox{for all $n\ge N(k,M)$ and    
for all $d\le [\ln\,n]^k$}.   
$$   
Note that $d\le[\ln\,n]^k$ iff $n\ge\exp(d^{\,1/k})$.   
Therefore we can rewrite the last expression as   
$$   
\frac{{\rm CRI}_d}{\lambda_{d,n}}\ge (\ln n)^M\ \ \    
\mbox{for all $d\in \NN$ and $n\ge   
  \max\left(N(k,M),\exp(d^{1/k})\right)$}.   
$$   
Take now $M=4/s$ and $k>1/t$, and let   
$$   
N^*=N(k,M,c,d)=\max\left(N(k,M),\exp(d^{1/k}),\exp(2/c)\right).   
$$   
Then   
$$   
\alpha:=   
\sum_{n=1}^\infty   
\exp\left(-c\left(\frac{{\rm   
        CRI}_d}{\lambda_{d,n}}\right)^{s/2}\right)   
\le N^*-1+\sum_{n=N^*}^\infty\exp\left(-c(\ln\,n)^2\right).   
$$   
Note that $\exp(-c(\ln\,n)^2)\le 1/n^2$ for $n\ge \exp(2/c)$.   
Therefore   
$$   
\alpha\le N^*+\sum_{n=N^*}^\infty\frac1{n^2}\le   
\max\left(N(k,M),\exp(2/c)\right)+\frac{\pi2}{6}+\exp\left(d^{\,1/k}\right).   
$$   
Hence,   
$$   
\exp(-cd^{\,t})\,   
\sum_{n=1}^\infty   
\exp\left(-c\left(\frac{{\rm   
        CRI}_d}{\lambda_{d,n}}\right)^{s/2}\right)   
=\mathcal{O}\left(\exp\left(-cd^{\,t}+d^{\,1/k}\right)\right)   
$$   
with the factor in the big $\mathcal{O}$ notation independent of $d$.   
Since $t>1/k$, the last expression is uniformly bounded in $d$,   
and we have ALG-$(s,t)$-WT-ABS/NOR for all positive $s$ and $t$. This   
means that ALG-UWT-ABS/NOR holds, as claimed.   
\vskip 1pc   
We now consider the case of EXP. Assume first that we have   
EXP-UWT-ABS/NOR. We need to prove \eqref{998}. Since we have    
EXP-$(s,t)$-WT-ABS/NOR for all positive $s$ and $t$,    
due to Theorem \ref{thm:exp-wt-nor} we have for all positive $c$,   
$$   
M_{c,s,t}:=\sup_{d\in\NN}\exp(-cd^{\,t})\,\sum_{j=1}^\infty   
\exp\left(-c\left[1+\ln\left(2\max\left(1,\frac{{\rm   
            CRI}_d}{\lambda_{d,j}}\right)\right)\right]^s\right)<\infty.   
$$   
As for ALG, we conclude that   
$$   
\exp(-cd^{\,t})\,n\,   
\exp\left(-c\left[1+\ln\left(2\max\left(1,\frac{{\rm   
            CRI}_d}{\lambda_{d,n}}\right)\right)\right]^s\right)\le   
M_{c,s,t},   
$$   
which yields   
$$   
\left[1+\ln\left(2\max\left(1,\frac{{\rm   
            CRI}_d}{\lambda_{d,n}}\right)\right)\right]^s   
\ge \frac{\ln\,n-\ln(M_{c,s,t})-cd^{\,t}}{c}   
\ \ \ \mbox{for all $n\in\NN$}.   
$$   
Similarly as before, for an arbitrary integer $k$, we choose   
$t=1/(2k)$   
and conclude the existence of $n_{c,s,t}\ge 3$ such that for all    
$n\ge n_{c,s,t}$   
and all $d\le [\ln\,n]^k$ we have   
$$   
\frac{\ln\,n-\ln(M_{c,s,t})-cd^{\,t}}{c}   
\ge    
\frac{\ln\,n-\ln(M_{c,s,t})-c(\ln\,n)^{1/2}}{c}\ge   
(\ln\,n)^{1/2}.   
$$   
Hence, by taking the logarithms we conclude   
$$   
\ln\left(1+\ln\left(2\max\left(1,\inf_{d\le[\ln\,n]^k}   
\frac{{\rm CRI}_d}{\lambda_{d,n}}   
\right)\right)\right)\ge\frac1{2s}\,\ln\,\ln\, n   
\ \ \ \mbox{for all $n\ge n_{c,s,t}$}.   
$$   
Let    
$$   
x=\inf_{d\le [\ln\,n]^k}\frac{{\rm CRI}_d}{\lambda_{d,n}}.   
$$   
For small $s$ and $n\ge n_{c,s,t}$, we have large $x$, say, at least    
equal to $\exp(2)$. It is easy to check that    
$$   
\ln(1+\ln(2\max(1,x)))\le 2 \ln\left(\max(1,\ln\,x)\right)\ \ \ \mbox{for all $x\ge   
  \exp(2)$}.   
$$   
Therefore, for small $s$ we obtain   
$$   
\inf_{d\le[\ln\,n]^k}\frac{\ln\left(\max\left(1,\ln\,\frac{{\rm   
      CRI}_d}{\lambda_{d,n}}\right)\right)}{\ln\,\ln\,n}   
\ge \frac{1}{4s}.   
$$   
Since $s$ can be arbitrarily small, the limit of the left hand side    
is infinity as $n$ goes to infinity, and \eqref{998} holds.   
\vskip 1pc   
We finally assume that \eqref{998} holds. We need to prove   
EXP-UWT-ABS/NOR,   
or equivalently that EXP-$(s,t)$-WT-ABS/NOR holds for all positive $s$   
and $t$. This means that we need to prove that for all   
positive $c$,   
$$   
M_{c,s,t}:=\sup_{d\in\NN}\ \exp(-cd^{\,t})\,   
\sum_{j=1}^\infty   
\exp\left(-c\left[1+\ln\left(2\max\left(1,\frac{{\rm   
            CRI}_d}{\lambda_{d,j}}\right)\right)\right]^s\right)<\infty.   
$$   
From \eqref{998} we know that for all $k\in\NN$ and $M>0$ there exists    
$N(k,M)\ge3$ such that   
$$   
\frac{{\rm CRI}_d}{\lambda_{d,n}}\ge \exp\left(\left(\ln\,n   
  \right)^M\right)   
$$   
for all $d\in \NN$ and for all $n\ge   
  n^*:=\max\left(N(k,M),\exp(d^{\,1/k}),\exp(2/c)\right)$.   
Let   
$$   
\alpha:=   
 \sum_{j=1}^\infty   
\exp\left(-c\left[1+\ln\left(2\max\left(1,\frac{{\rm   
            CRI}_d}{\lambda_{d,j}}\right)\right)\right]^s\right).   
$$   
Then   
$$   
\alpha\le n^*-1+\sum_{n=n^*}^\infty   
\exp\left(-c(\ln\,n)^{Ms}\right).   
$$   
We now take $M=2/s$ and use again the fact that   
$\exp(-c(\ln\,n)^2)\le 1/n^2$ for $n\ge \exp(2/c)$. Then   
$$   
\alpha\le n^* +\frac{\pi^2}6=\mathcal{O}\left(\exp(d^{\,1/k})\right).   
$$   
Taking $k>1/t$, we conclude that   
$$   
M_{c,s,t}=\sup_{d\in\NN}\ \mathcal{O}   
\left(\exp\left(-cd^{\,t}+d^{\,1/k}\right)\right)<\infty.   
$$   
This completes the proof.   
\end{proof}   
   
\section*{Acknowledgements}   
P.~Kritzer is supported by the Austrian Science    
Fund (FWF): Project F5506-N26,    
which is part of the Special Research    
Program ``Quasi-Monte Carlo Methods: Theory and Applications''.   
   
H.~Wo\'zniakowski is supported in part by the National Science Center,   
Poland, based on the decision DEC-2017/25/B/ST1/00945.

The authors gratefully acknowledge the partial support    
of the Erwin Schr\"odinger International Institute    
for Mathematics and Physics (ESI) in Vienna under    
the thematic programme ``Tractability of High Dimensional    
Problems and Discrepancy''    
and the partial support of the National Science Foundation (NSF)    
under Grant DMS-1638521 to the    
Statistical and Applied Mathematical Sciences Institute (SAMSI).   
The research started during the stay of the authors in Vienna    
and during the conference at SAMSI.

\bigskip

\begin{small}   
\noindent\textbf{Authors' addresses:}\\   
   
 \noindent Peter Kritzer\\   
 Johann Radon Institute for Computational and Applied Mathematics (RICAM)\\   
 Austrian Academy of Sciences\\   
 Altenbergerstr. 69, 4040 Linz, Austria.\\   
 \texttt{peter.kritzer@oeaw.ac.at}   
    
 \medskip   
    
 \noindent Henryk Wo\'{z}niakowski\\   
 Department of Computer Science\\   
 Columbia University\\   
 New York 10027, USA.\\   
 Institute of Applied Mathematics\\   
 University of Warsaw\\   
 ul. Banacha 2, 02-097 Warszawa, Poland\\   
 \texttt{henryk@cs.columbia.edu}

\end{small}

\end{document}